\newtheorem{theorem}{Theorem}
\newtheorem{lemma}{Lemma}
\newtheorem{corollary}{Corollary}
\newtheorem{proposition}{Proposition}
\newtheorem{remark}{Remark}
\newtheorem{definition}{Definition}
\newtheorem{algorithm}{Algorithm}
\DeclareMathOperator{\grad}{grad}
\newcommand{\m}{\mathbb{M}}
\newcommand{\tpm}{T_{p}\mathbb{M}}
\begin{document}

\title{On the globalization  of  Riemannian Newton method}

\author{
Bortoloti, M. A. A.  \thanks{Universidade Estadual do Sudoeste da Bahia, BA 45083-900, BR (e-mail:{\tt mbortoloti@uesb.edu.br}, {\tt telesfernandes@uesb.edu.br}). The author was supported in part by UESB.}
\and
Fernandes, T. A. \footnotemark[1]
\and
Ferreira, O. P.  \thanks{ Universidade Federal de Goias, Goiania, GO 74001-970, BR (e-mail:{\tt orizon@ufg.br}). The author was supported in part by CNPq Grants 305158/2014-7 and FAPEG/GO.} 
}
\maketitle
%%%%%%%%%%%%%%%%%%%%%%%%%%
\begin{abstract}
\noindent In the present paper, in order to find a singularity of a vector field defined on Riemannian manifolds, we present a new  globalization strategy  of Newton method and establish its global convergence with superlinear rate. In particular, this globalization generalizes for a general retraction the existing damped Newton's method. The  presented  global convergence analysis does not require any hypotesesis on singularity of the vector field. We applied the proposed method to solve the truncated singular value problem on the product of two Stiefel manifolds, the dextrous hand grasping problem on the cone of  symmetric  positive definite matrices and the Rayleigh quotient on the sphere. Moreover,   some academic  problems  are solved. Numerical experiments are presented showing that the proposed algorithm has better robustness compared  with  the aforementioned  method.

\end{abstract}
\vspace{0.7cm}

\noindent
{\bf Keywords:} Global Convergence $\cdot$ Riemannian Newton Method $\cdot$ Superlinear Rate $\cdot$ Retraction.
\vspace{0.5cm}

\noindent
 {\bf  2010 AMS Subject Classification:}  	90C30,  49M15, 65K05.

%%%%%%%%%%%%%%%%%%%%%%%%%%
\section{Introduction}\label{Introduction}
%%%%%%%%%%%%%%%%%%%%%%%%%%
Iterative methods on manifolds arise in the context of optimizing a real-valued function, dating back to the work of Luenberger \cite{Luenberger1972} in the early 1970s, if not earlier. Luenberger proposed the idea of performing a line search along geodesics that are computationally feasible. Around 1990, the main research issue was to exploit differential-geometric objects in order to formulate optimization strategies on abstract nonlinear manifolds. Gabay in \cite{Gabay1982} was the first to focus on optimization on manifolds by minimizing a differentiable function defined on a Riemannian manifold. In the 1990s, the field of optimization on manifolds gained considerable popularity, especially with the work of Edelman et al. \cite{EdelmanAriasSmith1999}. Recent years have witnessed a growing interest in the development of numerical algorithms for nonlinear manifolds, as there are many numerical problems posed in manifolds arising in various natural contexts. For example, eigenvalue problems  \cite{vandereycken2013,liu2015,wen2017, zhang2014, Chiang2019}, low-rank matrix completion \cite{wen2012}, loss minimization problem \cite{sato2019} and dextrous hand grasping problem \cite{robotic2002,robotic2007,robotic2002_2}. For such problems, the solutions of a system of equations often have to be computed or the zeros of a vector field have to be found. Because these problems are naturally posed on Riemannian manifolds, we can use the specific underlying geometric and algebraic structures to significantly reduce the computational cost of finding the zeros of a vector field. In this work, instead of focusing on finding singularities of gradient vector fields on Riemannian manifolds, which includes finding local minimizers, we consider the more general problem of finding singularities of vector fields. 

Newton's method is known to be a powerful tool for finding the zeros of nonlinear functions in Banach spaces. It also serves as a powerful theoretical tool with a wide range of applications in pure and applied mathematics \cite{Adler2002,Nash1956,Moser1961}. These factors have motivated several studies to investigate the issue of generalizing Newton's method from a linear setting to the Riemannian setting \cite{Absil2009,Li2009,Argyros2009,Schulz2014,Wang2009,Ferreira2012,Ferreira2002,Li2006,FernandesAndFerreiraAndYuan2017}. Although Newton's method shows fast local convergence, it is highly sensitive to the initial iterate and may diverge if the initial iterate is not sufficiently close to the solution. Thus, Newton's method does not converge in general. To overcome this drawback, some strategies have been introduced for using Newton's method in optimization problems, such as the Broyden--Fletcher--Goldfarb--Shanno (BFGS) algorithm, Levenberg--Marquardt algorithm, and trust region algorithm \cite{Dennis1996,Bertsekas2014}. When the objective function is twice continuously differentiable and strongly convex, the Newton direction is a descent direction of the objective function. Hence, by adjusting the step size in the Newton direction using, e.g., the Armijo rule, we can ensure convergence of Newton's method. This strategy of dumping the step size to globalize Newton's method is known as damped Newton's method. For a comprehensive study of this method, see \cite{Dennis1996,Solodov2014,Bertsekas2014,Burdakov1980}. For the problem of finding a zero of a nonlinear equation in a Euclidean setting, this strategy of dumping the Newton step size can also be adopted by using a merit function for which the Newton direction is a descent direction. This strategy was generalized from Euclidian context to Riemannian context, see \cite{Bortoloti2020}.

The generalization  to Riemannian setting of damped Newton's method by using a merit function  was obtained by using the notion of continuously moving  in the Newton direction while staying on a geodesic curve in the manifold until we reach a point where the vector field vanishes. By using the geodesic curve, we can  define the exponential mapping that can be used  to give a short notation  for a  geodesic with a given  starting  point and  initial  velocity. However, the geodesic, and consequently the exponential mapping, is defined as the solution of a nonlinear ordinary differential equation, whose efficient computation generally involves significant numerical challenges. Nevertheless, an approximation of the geodesic is sufficient to guarantee the desired convergence properties. Actually, to obtain the next iterate of an iterative method on a manifold, it is sufficient to use the notion of moving in the direction of a tangent vector while staying on the manifold. It is generalized by the notion of a retraction mapping that may generate a curve on the manifold with greater computational efficiency compared to the exponential mapping. The idea of using computationally efficient alternatives to the exponential mapping was introduced in \cite{Manton2002}. The strategy of using approximations of classical geometric concepts to obtain efficient iterative algorithms has attracted considerable attention lately in the context of Riemannian optimization; see, e.g., \cite{AbsilAndBakerAndGallivan2007,Absil2009,Ring2012,CambierAbsil2016, MR3093408, MR3093466}. Obtaining an iterative method using retraction is becoming increasingly common, as such algorithms are  faster and possibly more robust than existing algorithms. Recent studies on the development of geometric optimization algorithms that exploit the mapping retraction on nonlinear manifolds include \cite{HuangWenAbsilGallivan2015,HuangAndWenAndGallivan2017,HuangAndAbsilAndGallivan2017,Zhu2017,HuangAndGallivanAndAndAnujAndAbsil2016}. A  toolbox for building retractions on manifolds can be found in \cite{AbsilandMalick2012}.

Our main contribution is to present a Newton type algorithm with global convergence. From the theoretical point  of view this algorithm generates  a sequence that converges  without any assumption on singularity  of the vector field in consideration, which improves the convergence analysis of \cite{Aihara2017, Bortoloti2020}.   Besides,  this algorithm uses general retractions instead of only the exponential mapping. In addition,  numerical experiments are  presented  showing  that  the proposed  algorithm has better  robustness compared with  the one presented in  \cite{Aihara2017, Bortoloti2020} and better  performance than the one presented in \cite{Bortoloti2020}.   In order to present a numerical performance for proposed algorithm  we have submitted it to the task to solve the truncated singular value problem on the product of two Stiefel manifolds \cite{Sato2013}, the Rayleihg quotient on the sphere and the dextrous hand grasping problem, see \cite{robotic2002,robotic2007,robotic2002_2}. Also, we yet analyse the problem to find the singularity of a nonconservative vector field on the sphere and to minimize an academic function on the cone of symmetric definite positive matrices.

 The remainder of this paper is organized into five sections. The basic concepts  and auxiliary results are developed in Section \ref{sec:basic}. In Section \ref{sec:supelinear}, we present a local superlinear convergence of the Newton method with retraction. The main result  of the paper is presented in Section \ref{sec:DampedNewtonMethod}. Concrete examples and numerical experiments of the new gained insights of the proposed method are presented in Section \ref{Sec:Numerical_Experiment}. Concluding remarks are presented in Section \ref{sec:conclusions}.

%%%%%%%%%%%%%%%%%%%%%%%%%%%
\section{Preliminaries}\label{sec:basic}
%%%%%%%%%%%%%%%%%%%%%%%%%%%
In this section, we recall some notations, definitions, and basic properties of Riemannian manifolds used throughout the paper, which can be found in many introductory books on Riemannian geometry, for example \cite{doCarmo1992} and \cite{Sakai1996}. 

For a smooth manifold $\mathbb{M}$, denote the {\it tangent space} of $\m$ at $p$ by $\tpm$ and the {\it tangent bundle} of $\m$ by $T\m=\bigcup_{p\in\m}\tpm$.  The corresponding norm associated with the Riemannian metric $\langle \cdot ~, ~ \cdot \rangle$ is denoted by $\| \cdot \|$. The Riemannian  distance  between $p$ and $q$   in a finite-dimensional Riemannian manifold $\mathbb{M}$ is denoted  by $d(p,q)$, and it induces the original topology on $\mathbb{M}$. An open ball of radius $r>0$ centered at $p$ is defined as  $B_{r}(p):=\left\lbrace q\in\m:d(p,q)<r\right\rbrace$.  Let  $\Omega \subset  \m$ be an open set, and let ${\cal X}(\Omega)$ denote the space of  $C^1$ vector fields on $\Omega$.  Let $\nabla$ be the Levi-Civita connection associated with $(\mathbb{M}, \langle \cdot ~, ~ \cdot \rangle)$.  The covariant derivative of $X \in {\cal X}(\Omega)$ denoted by $\nabla$ defines at each $p\in \Omega$ a linear map $\nabla X(p):\tpm\to\tpm$ given by $\nabla X(p)v:=\nabla_{Y}X(p)$, where $Y$ is a vector field such that $Y(p)=v$.   For $f: \mathbb{M} \to \mathbb{R}$,  a twice-differentiable function the Riemannian metric induces the mappings   $f\mapsto  \mbox{grad} f $ and   $f\mapsto \mbox{Hess} f$, which associate    its {\it gradient} and {\it Hessian} via the rules $d f(X):=\langle \mbox{grad} f,X\rangle$  and  $d^2 f(X, X):=\mbox{Hess} f X,X\rangle$, for all $X \in {\cal X}(\Omega)$, respectively.  Therefore,   $\mbox{Hess} f X= \nabla_{X}  \mbox{grad} f$, for all $X \in {\cal X}(\Omega)$. The {\it norm of a  linear map} $A:T_p \mathbb{M} \to T_p \mathbb{M}$  is defined by $\|A\|:=\sup \left\{ \|Av \|~:~ v\in T_p \mathbb{M}, \,\|v\|=1 \right\}$.  A vector field $V$ along a differentiable curve $\gamma$ in $\mathbb{M}$ is said to be {\it parallel} iff $\nabla_{\gamma^{\prime}} V=0$.  For each $t \in [a,b]$, the operator $\nabla$ induces an isometry relative to $ \langle \cdot , \cdot \rangle  $, $P_{\gamma,a,t} \colon T _{\gamma(a)} {\mathbb{M}} \to T _{\gamma(t)} {\mathbb{M}}$, defined by $ P_{\gamma,a,t}\, v = V(t)$, where $V$ is the unique vector field on $\gamma$ such that
$\nabla_{\gamma'(t)}V(t) = 0$ and $V(a)=v$, the so-called {\it parallel transport} along  of a segment  of curve  $\gamma$ joining the points   $\gamma(a)$  and  $\gamma(t)$. Further, note that $ P_{\gamma,\,b_{1},\,b_{2}}\circ P_{\gamma,\,a,\,b_{1}}=P_{\gamma,\,a,\,b_{2}}$ and  $P_{\gamma,\,b,\,a}=P^{-1}_{\gamma,\,a,\,b}$. As long as there is no confusion, we will  consider the notation $P_{pq}$  instead of $P_{\gamma,\,a,\,b}$ when $\gamma$ is the unique segment of curve joining  $p$ and $q$. The following lemma   ensures that, if $\nabla X(\bar{p})$ is nonsingular then there exists a neighborhood of $\bar{p}$ such that $\nabla X$ is also nonsingular.
\begin{lemma}\label{le:NonSing}
 Assume that $\nabla X$ is  continuous at ${\bar p}$. Then, ${\lim_{p\to {\bar p}}}\left \| P_{p{\bar p}}\nabla X(p)P_{{\bar p}p}-\nabla X({\bar p}) \right \|=0.$ Moreover, if  $\nabla X({\bar p})$ is  nonsingular,  then there exists $0<\bar{\delta}<\delta_{{\bar p}}$ such that $B_{\bar{\delta}}({\bar p})\subset \Omega$, and  for each $p\in B_{\bar{\delta}}({\bar p})$,   $\nabla X(p)$ is nonsingular and $\left \| \nabla X(p)^{-1}\right \|\leq 2\left \|\nabla X({\bar p})^{-1}\right \|$.
\end{lemma}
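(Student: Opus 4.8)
The statement splits naturally into the limit assertion and the nonsingularity assertion, and the second follows from the first by a standard perturbation (Banach lemma) argument, so the plan is to establish the limit first and then feed it into the perturbation estimate.

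For the limit, the idea is to transport everything into the single fixed vector space $T_{\bar p}\mathbb{M}$, where the operator norm is comparable to any matrix norm taken in a fixed basis. Fix an orthonormal basis $e_1,\dots,e_n$ of $T_{\bar p}\mathbb{M}$. Since $\delta_{\bar p}$ is chosen so that the minimizing geodesic from $\bar p$ to any $p\in B_{\delta_{\bar p}}(\bar p)$ is unique, the parallel transport $P_{\bar p p}$ is well defined and, being induced by the Levi--Civita connection, is a linear isometry; hence $P_{\bar p p}e_1,\dots,P_{\bar p p}e_n$ is an orthonormal basis of $T_p\mathbb{M}$ depending continuously (indeed smoothly) on $p$, with $P_{\bar p\bar p}=\mathrm{Id}$. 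The $(i,j)$ entry of $P_{p\bar p}\nabla X(p)P_{\bar p p}$ in the basis $e_1,\dots,e_n$ equals $\langle \nabla X(p)P_{\bar p p}e_j,\,P_{\bar p p}e_i\rangle$. Applying the hypothesis that $\nabla X$ is continuous at $\bar p$ to the continuous vector fields $p\mapsto P_{\bar p p}e_j$ and $p\mapsto P_{\bar p p}e_i$, each such entry converges, as $p\to\bar p$, to $\langle \nabla X(\bar p)e_j,e_i\rangle$, which is the $(i,j)$ entry of $\nabla X(\bar p)$. Because all norms on the finite-dimensional space of endomorphisms of $T_{\bar p}\mathbb{M}$ are equivalent, entrywise convergence gives $\lim_{p\to\bar p}\|P_{p\bar p}\nabla X(p)P_{\bar p p}-\nabla X(\bar p)\|=0$, which is the first claim.

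For the second claim, set $A:=\nabla X(\bar p)$ and $\tilde A(p):=P_{p\bar p}\nabla X(p)P_{\bar p p}$, both endomorphisms of $T_{\bar p}\mathbb{M}$. I will use the elementary fact that if $A$ is nonsingular and $\|\tilde A(p)-A\|\le 1/(2\|A^{-1}\|)$, then $\tilde A(p)$ is nonsingular with $\|\tilde A(p)^{-1}\|\le 2\|A^{-1}\|$; this follows by writing $\tilde A(p)=A\big(\mathrm{Id}+A^{-1}(\tilde A(p)-A)\big)$, noting $\|A^{-1}(\tilde A(p)-A)\|\le 1/2<1$, and bounding the Neumann series of the second factor by $(1-1/2)^{-1}=2$. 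By the first claim I may pick $\bar\delta$ with $0<\bar\delta<\delta_{\bar p}$ and $B_{\bar\delta}(\bar p)\subset\Omega$ (the latter is possible since $\Omega$ is open and $\bar p\in\Omega$) so small that $\|\tilde A(p)-A\|\le 1/(2\|A^{-1}\|)$ for every $p\in B_{\bar\delta}(\bar p)$, giving the bound $\|\tilde A(p)^{-1}\|\le 2\|\nabla X(\bar p)^{-1}\|$.

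Finally I would transfer this conclusion from $\tilde A(p)$ back to $\nabla X(p)$. Since $P_{\bar p p}=P_{p\bar p}^{-1}$, one has $\nabla X(p)=P_{\bar p p}\,\tilde A(p)\,P_{p\bar p}$, so $\nabla X(p)$ is a composition of the invertible map $\tilde A(p)$ with the isometries $P_{\bar p p}$ and $P_{p\bar p}$; hence $\nabla X(p)$ is nonsingular with $\nabla X(p)^{-1}=P_{\bar p p}\,\tilde A(p)^{-1}\,P_{p\bar p}$, and because parallel transport preserves norms, $\|\nabla X(p)^{-1}\|=\|\tilde A(p)^{-1}\|\le 2\|\nabla X(\bar p)^{-1}\|$. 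The main obstacle is the first claim: the content there is to make precise the sense in which $\nabla X$ being continuous controls the parallel-transported operator uniformly in all directions, which is why I reduce to matrix entries in a transported orthonormal frame where continuity of both $\nabla X$ and of parallel transport can be invoked directly; the perturbation and transfer steps are then routine.
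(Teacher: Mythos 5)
Your proof is correct and is essentially the canonical argument here: the paper offers no proof of its own, deferring entirely to \cite[Lemma~3.2]{FernandesAndFerreiraAndYuan2017}, and the proof behind that citation rests on exactly your two ingredients --- continuity of $\nabla X$ read in a parallel-transported orthonormal frame (using that $P_{\bar p p}$ is an isometry depending continuously on $p$, so entrywise convergence in finite dimension yields $\lim_{p\to\bar p}\left\|P_{p\bar p}\nabla X(p)P_{\bar p p}-\nabla X(\bar p)\right\|=0$), followed by the Banach/Neumann perturbation lemma and the observation that conjugation by the isometries $P_{\bar p p}$ and $P_{p\bar p}$ preserves operator norms and invertibility. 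The only point worth a passing remark is that in this paper $\delta_{\bar p}=\min\{1,i_{\bar p}\}$ is defined via the injectivity radius of the retraction $R$ rather than of the exponential map, so strictly one should shrink $\bar\delta$ so that the geodesic segment joining $\bar p$ to $p$ --- and hence the transport $P_{\bar p p}$ --- is well defined, an assumption the statement of the lemma already makes implicitly.
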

\begin{proof}
See \cite[Lemma 3.2]{FernandesAndFerreiraAndYuan2017}
\end{proof}
In the following we present the concept of  retraction which has been introduced by \cite{Manton2002}.  
\begin{definition}\label{Df:nqc}
A \textit{retraction} on a manifold $\mathbb{M}$ is a smooth mapping $R$ from the tangent bundle $T\mathbb{M}$ onto $\mathbb{M}$ with the following properties: If $R_{p}$ denote the restriction of $R$ to $T_{p}\mathbb{M}$, then
\begin{itemize}
\item[(i)] $R_{p}(0_{p})=p$, where $0_{p}$ denotes the origin of $T_{p}\mathbb{M}$;
\item[(ii)] With the canonical identification $T_{0_{p}}T_{p}\mathbb{M}\simeq T_{p}\mathbb{M}$, $R'_{p}(0_{p})=I_{p}$, where $I_{p}$ is the identity mapping on $T_{p}\mathbb{M}$, and $R'_{p}$ denotes the diferential of $R_{p}$.
\end{itemize}
\end{definition}
The Definition~\ref{Df:nqc} implies that the  \textit{exponential map}  is a retraction, see \cite{Absil2009}.  Since $R'_{p}(0_{p})=I_{p}$, by the Inverse Function Theorem, $R_{p}$ is a local diffeomorphism. Hence, we  define the {\it injectivity radius} of $\mathbb{M}$ at $p$ with respect to $R$ as follow $i_{p}:=\sup\left\{ r>0~:~{R_{p}}_{\lvert_{B_{r}(0_{p})}} \mbox{ is\, a\, diffeomorphism} \right\},$
where $B_r(0_{p}):=\left\lbrace  v\in T_{p}\mathbb{M}:\parallel v-0_{p}\parallel <r\right\rbrace$.
\begin{remark}\label{unicidadedageodesica}
Let $\bar{p}\in \mathbb{M}$. The above definition implies  that if $0<\delta<i_{\bar{p}}$, then $R_{\bar p}B_{\delta}(0_{ \bar p})=B_{\delta}( \bar p)$. Moreover, for all $p\in B_{\delta}(\bar p)$  the  curve segment $\gamma_{\bar{p}p}(t)=R_{\bar{p}}\left(t R^{-1}_{\bar p}p\right)$ joining  $\bar p$ to $p$ belongs to $B_{\delta}( \bar p)$.
\end{remark}
The following result  establishes an important  relation between the retraction and the Riemannian distance and its proof can be found in \cite[Lemma 6]{Ring2012}.
\begin{proposition}\label{lm:kwqdklu}
Let $\mathbb{M}$ be a Riemannian  manifold endowed with a retraction $R$ having equicontinuous derivatives in a neighborhood  of  $\bar{p}\in\mathbb{M}$. Then, there exist $a_{0}>0$, $a_{1}>0$, and $\delta_{a_{0},a_{1}}$ such that for all $p$ in a sufficiently small neighborhood of $\bar{p}$ and all $v\in T_{p}\mathbb{M}$ with $\|v\|\leq\delta_{a_{0},a_{1}}$, the following inequality holds
\begin{equation}\label{eq:bdr}
a_{0}\|v\|\leq d\left(p,~R_{p}(v)\right)\leq a_{1}\|v\|.
\end{equation}
\end{proposition}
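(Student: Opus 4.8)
The plan is to prove the two inequalities separately: the right-hand bound comes from the length of the natural curve traced by the retraction, and the left-hand bound from a comparison of $R_p$ with the exponential map $\exp_p$. The hypothesis that the derivatives $R'_p$ are equicontinuous near $\bar p$ is what makes every constant below uniform over a whole neighborhood of $\bar p$, rather than valid only at a single base point.

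For the upper bound I would fix $p$ near $\bar p$ and $v\in\tpm$ and use the smooth curve $\gamma(t)=R_p(tv)$, $t\in[0,1]$, which joins $p=R_p(0_p)$ to $R_p(v)$. Since $d$ is the infimum of lengths of curves joining its arguments,
\[
d\big(p,R_p(v)\big)\le\int_0^1\|R'_p(tv)\,v\|\,dt\le\Big(\sup_{0\le t\le1}\|R'_p(tv)\|\Big)\|v\|.
\]
Because $R'_p(0_p)=I_p$ and $\{R'_p\}$ is equicontinuous, for any $\varepsilon\in(0,1)$ there is a radius $\delta>0$ independent of $p$ with $\|R'_p(w)-I_p\|<\varepsilon$ whenever $\|w\|<\delta$; hence $\|R'_p(tv)\|\le1+\varepsilon$ for $\|v\|\le\delta$, and the upper bound holds with $a_1:=1+\varepsilon$.

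The lower bound is the real obstacle, since it must dominate the length of \emph{every} curve joining $p$ to $R_p(v)$, not just the one above. To treat it I would set $\phi_p:=\exp_p^{-1}\circ R_p$ on a small ball of $\tpm$. Then $\phi_p(0_p)=0_p$, and since $\exp_p$ is also a retraction, the chain rule gives $\phi'_p(0_p)=(\exp_p^{-1})'(p)\circ R'_p(0_p)=I_p\circ I_p=I_p$. Continuity of $\phi'_p$ near $0_p$, uniform in $p$, then yields $\|\phi_p(v)-v\|\le\varepsilon\|v\|$ after possibly shrinking $\delta$, so that $\|\phi_p(v)\|\ge(1-\varepsilon)\|v\|$. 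Finally, radial geodesics are minimizing inside the injectivity radius, whence for $\|v\|$ small $d(p,R_p(v))=d\big(p,\exp_p(\phi_p(v))\big)=\|\phi_p(v)\|\ge(1-\varepsilon)\|v\|$, giving the lower bound with $a_0:=1-\varepsilon$.

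The point that ties everything together, and where I expect the care to lie, is uniformity: the radius $\delta$, the distortion $\varepsilon$, and a positive lower bound on the radius within which radial geodesics from $p$ are minimizing (needed so that $d(p,\exp_p(w))=\|w\|$) must all hold simultaneously for every $p$ in one fixed neighborhood of $\bar p$. Equicontinuity of $\{R'_p\}$ provides this for the retraction, while the standard fact that the exponential injectivity radius is positive and bounded below on a compact neighborhood of $\bar p$ supplies the last ingredient. Setting $\delta_{a_0,a_1}:=\delta$, $a_0:=1-\varepsilon$, and $a_1:=1+\varepsilon$ then gives the two-sided estimate \eqref{eq:bdr}.
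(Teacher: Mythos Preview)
The paper does not supply its own proof of this proposition; it simply defers to \cite[Lemma~6]{Ring2012}. Your argument is correct and is essentially the standard one that appears there: bound $d(p,R_p(v))$ from above by the length of the retraction curve $t\mapsto R_p(tv)$, and from below by composing with $\exp_p^{-1}$ and using that radial geodesics are minimizing, with equicontinuity of $R'_p$ (and smooth dependence of $\exp_p$ on $p$) ensuring all constants are uniform in a neighborhood of $\bar p$. One small point worth making explicit: the step $\|\phi_p(v)-v\|\le\varepsilon\|v\|$ uses not only equicontinuity of $R'_p$ but also that $(\exp_p^{-1})'$ depends continuously on both base point and argument; you gesture at this in the final paragraph via the injectivity-radius bound, but the actual ingredient is the smoothness of $(p,w)\mapsto\exp_p(w)$, which is what makes $\phi'_p$ equicontinuous.
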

Let $a_{0}>0$, $a_{1}>0$, and $\delta_{a_{0},a_{1}}$ be given as in the Lemma~\ref{lm:kwqdklu}. Making  $\delta_{a_{0},a_{1}}$ smaller, if necessary,  such that $\delta_{a_{0},a_{1}}<i_{\bar{p}}$. Let $v\in B_{\delta_{a_{0},a_{1}}}(0_{\bar{p}})$ and $p=R_{\bar{p}}(v)$. Then,  by  \eqref{eq:bdr} we can conclude
\begin{equation}\label{eq:qwhj1}
a_{0}\left\|R^{-1}_{\bar{p}}(p)\right\|\leq d\left(\bar{p}, ~p\right)\leq a_{1}\left\|R^{-1}_{\bar{p}}(p)\right\|,\qquad \forall ~ p\in B_{\delta}(\bar{p}), 
\end{equation}
where $\delta <\delta_{a_{0},a_{1}}$.
Now, we are ready to define the number $K_{R,p}$. Letting  $i_{p}$ be the radius of injectivity of $\mathbb{M}$ at $p$ we define the quantity 
\begin{equation}\label{eq:vizinjetiv}\nonumber
\delta_{p}:=\min\{1, i_{p}\}.
\end{equation} 
\begin{equation} \label{eq:Kp}
K_{R,p}:=\sup\left \{  \dfrac{d(R_{q}u, R_{q}v)}{\parallel u-v\parallel} ~:~ q\in B_{\delta_{p}}(p), ~ u,\,v\in T_{q}\mathbb{M}, ~u\neq v, ~\parallel v\parallel\leq \delta_{p},~\parallel u-v\parallel\leq \delta_{p}\right\}.
\end{equation}

Let  $i_{p}$ be the radius of injectivity of $\mathbb{M}$ at $p$  and define the quantity $\delta_{p}:=\min\{1, i_{p}\}$. Consider  $X\in {\cal X}(\Omega)$   and $  \bar{p}\in \Omega$. Assume that  $0<\delta<\delta_{\bar{p}}$. From definition of $\delta_{p}$ follows that for any curve $[0,~1]\owns t\mapsto\gamma(t)=R_{\bar{p}}\left(tv_{\bar{p}}\right)$  joining  $\bar{p}$ to $p\in B_{\delta}(\bar{p})$ such that $\gamma'(0)=v_{\bar{p}}$, we have $v_{\bar{p}}=R^{-1}_{\bar{p}}p$. Moreover, using \cite[ equality 2.3]{Ferreira2002} we obtain
\begin{equation}\label{diferenciabilidade}
X(p)= P_{\bar{p}p}X(\bar{p})+P_{\bar{p}p}\nabla X(\bar{p})R^{-1}_{\bar{p}}p+ \|R^{-1}_{\bar{p}}p\|r(p),  \qquad \underset{p\to \bar{p}}{\lim}r(p)=0, 
\end{equation}
for each $p\in   B_{\delta}(\bar{p})$.

We end  this section by formally presenting the problem of interest in this paper. Let  $X:\mathbb{M}\to T\mathbb{M}$ with $X(p)\in T_p\mathbb{M}$ be a differentiable vector field. We are interested in to find a  $p\in \mathbb{M}$ such that
\begin{equation} \label{eq:TheProblem0}
 X(p)=0. 
\end{equation}
%%%%%%%%%%%%%%%%%%%%%%%%%%
\section{Local superlinear convergence of  Newton method}  \label{sec:supelinear}
%%%%%%%%%%%%%%%%%%%%%%%%%%
In this section, we analyse the  local convergence of Newton method with a general retraction to solve the problem \eqref{eq:TheProblem0}, which generalize the  results   presented in  \cite{FernandesAndFerreiraAndYuan2017}. We first formally present Newton method with a general retraction. It is described as follow.

\noindent
\\
\hrule
\begin{algorithm} \label{AL:NewtonPuro}
{\bf Newton Method\\}
\hrule
\begin{description}
\item[Step 0.] Take an initial point $p_0 \in \mathbb{M}$, and set $k=0$.
\item[Step 1.] Compute {\it search direction} $v_{k}\in T_{p_{k}}\mathbb{M}$ as a solution of the linear equation
\begin{equation}\label{eq:EQNEWTONPURO12}
X(p_{k})+\nabla X(p_{k})v=0.
\end{equation}
\hspace{.6cm }If $v_{k}$ exists go to {\bf Step $2$}. Otherwise, stop.
\item[Step 2.] Compute 
\begin{equation} \label{eq:NM12PURO}
p_{k+1}:=R_{p_{k}}v_{k}.
\end{equation}
\item[Step 3.] Set $k\leftarrow k+1$ and go to {\bf Step 1}
\end{description}
\hrule
\end{algorithm}
\vspace{0.3cm}
When the retraction $R$ is the exponential mapping, a sequence generated by this method converges to a singularity of $X$ with superlinear rate,  \cite{FernandesAndFerreiraAndYuan2017}. Moreover, if  the covariant derivative  is Lipschitz continuous around the singularity then the method has $Q$-quadratic convergence rate, see \cite{Ferreira2012}. It is well known that the convergence of a sequence generate by Algorithm~\ref{AL:NewtonPuro} is ensured when the initial guess is sufficiently  close to a solution at which the covariant  derivative is nonsingular. Otherwise, the equation \eqref{eq:EQNEWTONPURO12} may not have a solution. In this case, the  Algorithm~\ref{AL:NewtonPuro} stops. In the next section, we present a new algorithm that overcome this. Now, our aim is to prove a generalization of \cite[Theorem 3.1]{FernandesAndFerreiraAndYuan2017}, it is, under  the assumption  of nonsingularity of the covariant derivative at the solution ${\bar p}$, the  iteration  \eqref{eq:NM12PURO} starting in a suitable neighborhood of  ${\bar p}$  is well defined and converges superlinearly to ${\bar p}$. Before to obtain that generalization some results are required.

Consider $\bar{\delta}>0$  given by Lemma~\ref{le:NonSing} and define {\it Newton's iterate mapping}   $N_{R,X}:B_{\bar{\delta}}(\bar{p}) \to  \mathbb{M} \nonumber $  by
\begin{equation} \label{Newton's iterate mapping}
N_{R,X}(p):=R_{p}(-\nabla X(p)^{-1}X(p)).
\end{equation}
 The next lemma ensures  existence  of a neighborhood of $\bar{p}$ where Newton's iterate given by \eqref{eq:NM12PURO} belong to the same neighborhood. 
\begin{lemma}\label{L5} 
Let  ${\bar p}\in\mathbb{M}$ a solution of \eqref{eq:TheProblem0}. Assume  that $\nabla X$ is continuous at ${\bar p}$ and $\nabla X({\bar p})$ is  nonsingular.  Then, 
$$
\lim_{p\to {\bar p}}\frac{d(N_{R,X}(p),{\bar p})}{d(p,{\bar p})}=0.
$$
\end{lemma}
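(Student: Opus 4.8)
The plan is to compare the Newton iterate $N_{R,X}(p)=R_{p}(v)$, where $v:=-\nabla X(p)^{-1}X(p)$, with the exact target written as $\bar p=R_{p}(w)$, where $w:=R_{p}^{-1}\bar p$, so that both points appear as images under the single retraction $R_{p}$ and the constant $K_{R,\bar p}$ of \eqref{eq:Kp} can be used. First I would check that $v$ and $w$ are admissible in that supremum: since $X(\bar p)=0$ and $X$ is $C^{1}$, continuity gives $X(p)\to 0$, while Lemma~\ref{le:NonSing} gives $\|\nabla X(p)^{-1}\|\le 2\|\nabla X(\bar p)^{-1}\|$ on $B_{\bar\delta}(\bar p)$; hence $\|v\|\le 2\|\nabla X(\bar p)^{-1}\|\,\|X(p)\|\to 0$, and $\|w\|\to 0$ because $a_{0}\|w\|\le d(p,\bar p)$ by Proposition~\ref{lm:kwqdklu} applied with base point $p$. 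Thus for $p$ close enough to $\bar p$ one has $p\in B_{\delta_{\bar p}}(\bar p)$ and $\|w\|,\|v-w\|\le\delta_{\bar p}$, so \eqref{eq:Kp} yields
\begin{equation*}
d\big(N_{R,X}(p),\bar p\big)=d\big(R_{p}v,\,R_{p}w\big)\le K_{R,\bar p}\,\|v-w\|.
\end{equation*}

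Next I would estimate $\|v-w\|$. Factoring out the invertible covariant derivative gives $v-w=-\nabla X(p)^{-1}\big[X(p)+\nabla X(p)\,w\big]$, so by Lemma~\ref{le:NonSing} we get $\|v-w\|\le 2\,\|\nabla X(\bar p)^{-1}\|\,\big\|X(p)+\nabla X(p)\,R_{p}^{-1}\bar p\big\|$. The heart of the argument is therefore the claim that $\|X(p)+\nabla X(p)R_{p}^{-1}\bar p\|=o\big(d(p,\bar p)\big)$ as $p\to\bar p$. To prove it I would invoke the first-order expansion behind \eqref{diferenciabilidade}, in the general form of \cite[equality~2.3]{Ferreira2002}, now based at the point $p$ and evaluated at $\bar p$: writing $0=X(\bar p)=P_{p\bar p}X(p)+P_{p\bar p}\nabla X(p)R_{p}^{-1}\bar p+\|R_{p}^{-1}\bar p\|\,s(p)$ with $s(p)\to 0$, and using that $P_{p\bar p}$ is an isometry, one obtains $\|X(p)+\nabla X(p)R_{p}^{-1}\bar p\|=\|R_{p}^{-1}\bar p\|\,\|s(p)\|$. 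Combining this with $\|R_{p}^{-1}\bar p\|\le d(p,\bar p)/a_{0}$ and the two displays above gives
\begin{equation*}
\frac{d\big(N_{R,X}(p),\bar p\big)}{d(p,\bar p)}\le \frac{2K_{R,\bar p}\,\|\nabla X(\bar p)^{-1}\|}{a_{0}}\,\|s(p)\|\xrightarrow[p\to\bar p]{}0,
\end{equation*}
which is exactly the assertion.

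I expect the main obstacle to be justifying the first-order expansion based at the \emph{moving} point $p$, and in particular that its remainder $s(p)$ tends to $0$ as $p\to\bar p$. Expansion \eqref{diferenciabilidade} is stated with the fixed base point $\bar p$; to use it with base point $p$ I must appeal to the expansion in its general form and rely on $X$ being $C^{1}$ (so that $\nabla X$ is continuous on a whole neighborhood of $\bar p$, not merely at $\bar p$) in order to control the remainder uniformly as the base point varies. Equivalently, keeping the base point $\bar p$ reduces the matter to the geometric compatibility $\|R_{p}^{-1}\bar p+P_{\bar p p}R_{\bar p}^{-1}p\|=o\big(d(p,\bar p)\big)$ between the inverse retractions at $p$ and at $\bar p$; verifying this first-order symmetry for a general retraction, where it is only approximate rather than exact as for the exponential map, is the delicate point. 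Finally, the finiteness of $K_{R,\bar p}$ used above follows from the equicontinuity of the derivatives of $R$ near $\bar p$, as in Proposition~\ref{lm:kwqdklu}.
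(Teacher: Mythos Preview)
Your proposal is correct and follows essentially the same architecture as the paper's proof: both write $\bar p=R_{p}(R_{p}^{-1}\bar p)$, invoke $K_{R,\bar p}$ from \eqref{eq:Kp} to reduce $d(N_{R,X}(p),\bar p)$ to the tangent-space quantity $\|\nabla X(p)^{-1}X(p)+R_{p}^{-1}\bar p\|$, bound $\|\nabla X(p)^{-1}\|$ via Lemma~\ref{le:NonSing}, and finish with \eqref{eq:qwhj1}.

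The only substantive difference is in how the core estimate $\|X(p)+\nabla X(p)R_{p}^{-1}\bar p\|=o(d(p,\bar p))$ is obtained. You base the first-order expansion at the \emph{moving} point $p$ and evaluate at $\bar p$, which gives the result directly but, as you correctly flag, requires the remainder $s(p)\to 0$ uniformly as the base point varies. The paper instead anchors the expansion \eqref{diferenciabilidade} at the \emph{fixed} point $\bar p$ and evaluates at $p$, writing $X(p)=P_{\bar p p}\nabla X(\bar p)R_{\bar p}^{-1}p+r(p)$; this sidesteps the moving-base-point issue and instead produces two terms that tend to zero, $\|r(p)\|$ and $\|P_{p\bar p}\nabla X(p)P_{\bar p p}-\nabla X(\bar p)\|$ (the latter by Lemma~\ref{le:NonSing}). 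The price the paper pays is that its ``algebraic manipulation'' tacitly uses $R_{p}^{-1}\bar p=-P_{\bar p p}R_{\bar p}^{-1}p$, which is exact for the exponential map but only a first-order approximation for a general retraction --- precisely the ``geometric compatibility'' you identify as the alternative delicate point. So the two routes share the same potential gap, just in different clothing; your diagnosis of the obstacle is accurate and matches what is implicit in the paper.
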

\begin{proof}

 Define $r(p):= X(p)- P_{{\bar p}p}X({\bar p})-P_{{\bar p}p}\nabla X({\bar p}){R^{-1}_{\bar p}p}$, for each $p\in B_{\bar{\delta}}({\bar p})$,  where $\bar{\delta} $ is given by Lemma \ref{le:NonSing}. By using some algebraic manipulations  we obtain the following equality
$$
 \nabla X(p)^{-1}X(p)+R^{-1}_p{\bar p}=\nabla X(p)^{-1} \big{[ }r(p)+ \left[ P_{{\bar p}p}\nabla X({\bar p})-\nabla X(p)P_{{\bar p}p}\right]R^{-1}_{\bar p}p\big{]}.
$$
Thus, using the above equality, the definition of $r$, and some  properties of the norm,  we  conclude
$$
\dfrac{\left\|\nabla X(p)^{-1}X(p)+R^{-1}_p{\bar p} \right\|}{d(p,~{\bar p})}\leq\left\|\nabla X(p)^{-1}\right \| \big{[} \left\| r(p) \right\|+
\left\| P_{{\bar p}p}\nabla X({\bar p})-\nabla X(p)P_{{\bar p}p}\right\|\big{]}\dfrac{ \left\|R^{-1}_{\bar p}p\right\|}{d(p,~{\bar p})}.
$$
By using \eqref{eq:qwhj1} we obtain $\left\|R^{-1}_{\bar p}p\right\|/d(p,~{\bar p})\leq1/a_{0}$. Combining these two last inequalities we have
$$
\dfrac{\left\|\nabla X(p)^{-1}X(p)+R^{-1}_p{\bar p} \right\|}{d(p,~{\bar p})}\leq\dfrac{\left\|\nabla X(p)^{-1}\right \|}{a_{0}} \left[ \left\| r(p) \right\|+
\left\| P_{{\bar p}p}\nabla X({\bar p})-\nabla X(p)P_{{\bar p}p}\right\|\right].
$$
Since $p\in  B_{\bar{\delta}}({{\bar p}})$, $P_{{\bar p}p}P_{p{\bar p}}=I_{p}$ where $I_{p}$ denotes the identity operator on $T_{p}\mathbb{M}$, and the parallel transport is an isometry, Lemma \ref{le:NonSing} combined with this last inequality implies 
\begin{equation}\label{eq:hel}
\left\|\nabla X(p)^{-1}X(p)+R^{-1}_p{\bar p} \right\|\leq
\dfrac{2}{a_{0}}\left\|\nabla X({\bar p})^{-1}\right\|  \left[ \left\| r(p) \right\|+ \left\| P_{p{\bar p}}\nabla X(p)P_{{\bar p}p}-\nabla X({\bar p})\right\| \right]d(p,~{\bar p}).
\end{equation}
Owing to Lemma~\ref{le:NonSing} and  $ {\lim_{p\to {\bar p}}}r(p)=0$,  the right-hand side of the last inequality tends to zero, as $p$ goes to ${\bar p}$. Recalling that  $\delta_{{\bar p}}=\min\{1, i_{{\bar p}}\}$, we  can shrink $ \bar\delta$,  if necessary,  to obtain
$$
\left\|  \nabla X(p)^{-1}X(p)+R^{-1}_p{\bar p} \right\|\leq \delta_{{\bar p}},\qquad  \forall ~p\in   B_{ \bar\delta}({\bar p}).
$$
Hence, from definitions of $N_{R,X}$ in \eqref{Newton's iterate mapping} and  $K_{R,{\bar p}} $ in \eqref{eq:Kp}, we can conclude
$$
d(N_{R,X}(p),{\bar p})\leq  K_{{\bar p}}\left\| - \nabla X(p)^{-1}X(p)-R^{-1}_p{\bar p} \right\|,\qquad  \forall ~p\in   B_{ \bar\delta}({\bar p}).
$$
Therefore, by combining \eqref{eq:hel} with  the last  inequality  we obtain  for all $p\in B_{ \bar\delta}({\bar p})$ that
$$
\dfrac{d(N_{R,X}(p),{\bar p})}{d(p, {\bar p})}\leq \dfrac{2}{a_{0}} K_{{\bar p}} \left\|\nabla X({\bar p})^{-1}\right\|  \left[ \left\| r(p) \right\|+ \left\| P_{p{\bar p}}\nabla X(p)P_{{\bar p}p}-\nabla X({\bar p})\right\| \right].
$$
By letting $p$  tend to ${\bar p}$ in the last inequality, by considering Lemma~\ref{le:NonSing} and that  $r(p)$ tends to zero, as $p$ goes to ${\bar p}$,  the desired result follows.
\end{proof}
In the next we show that whenever the vector field  is continuous and has  nonsingular  covariant derivative at a solution, there exist a neighborhood  around of it which  is  invariant by the Newton's iterate mapping associated. Its proof is a direct consequence from Lemma~\ref{le:NonSing} and Lemma~\ref{L5}.
\begin{lemma}\label{L:InvNx}
Let  $\bar{p}\in\mathbb{M}$ such that $X(\bar p)=0$. If $\nabla X$ is continuous at $\bar p$ and $\nabla X(\bar{p})$ is nonsingular,  then  there exists $0<\hat{\delta}<\delta_{\bar{p}}$ such that $B_{\hat{\delta}}(\bar{p})\subset \Omega$ and $\nabla X(p)$ is nonsingular  for each $p\in B_{\hat{\delta}}(\bar{p})$. Moreover,    $N_{R,X}(p)\in B_{\hat{\delta}}(\bar{p})$, for all $p\in B_{\hat{\delta}}(\bar{p})$.
\end{lemma}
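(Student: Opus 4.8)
The plan is to combine the two preceding lemmas exactly as advertised: use Lemma~\ref{le:NonSing} to secure a ball on which the Newton iterate mapping is well defined, and use Lemma~\ref{L5} to upgrade this into an invariance statement. First I would invoke Lemma~\ref{le:NonSing} at $\bar p$: since $\nabla X$ is continuous at $\bar p$ and $\nabla X(\bar p)$ is nonsingular, there is some $0<\bar\delta<\delta_{\bar p}$ with $B_{\bar\delta}(\bar p)\subset\Omega$ such that $\nabla X(p)$ is nonsingular for every $p\in B_{\bar\delta}(\bar p)$. This guarantees that $N_{R,X}(p)=R_p(-\nabla X(p)^{-1}X(p))$ is well defined on $B_{\bar\delta}(\bar p)$, and it already delivers the nonsingularity assertion of the lemma as soon as we impose $\hat\delta\le\bar\delta$.

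Next I would turn the superlinear ratio from Lemma~\ref{L5} into a genuine contraction near $\bar p$. Since $\lim_{p\to\bar p} d(N_{R,X}(p),\bar p)/d(p,\bar p)=0$, applying the definition of the limit with tolerance $1/2$ yields a radius $\delta_0>0$ such that $d(N_{R,X}(p),\bar p)\le \tfrac12\, d(p,\bar p)$ for all $p$ with $0<d(p,\bar p)<\delta_0$. I would then set $\hat\delta:=\min\{\bar\delta,\delta_0\}$, shrinking it further if needed so that $\hat\delta<\delta_{\bar p}$ and $B_{\hat\delta}(\bar p)\subset\Omega$; on this single ball both the nonsingularity of $\nabla X(p)$ and the contraction estimate hold simultaneously.

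Finally I would verify the invariance $N_{R,X}(p)\in B_{\hat\delta}(\bar p)$ for every $p\in B_{\hat\delta}(\bar p)$ by splitting into two cases. For $p=\bar p$, since $X(\bar p)=0$ the Newton direction is the zero vector, so $N_{R,X}(\bar p)=R_{\bar p}(0_{\bar p})=\bar p\in B_{\hat\delta}(\bar p)$ by property (i) of Definition~\ref{Df:nqc}. For $p\neq\bar p$ with $d(p,\bar p)<\hat\delta$, the contraction estimate gives $d(N_{R,X}(p),\bar p)\le \tfrac12\, d(p,\bar p)<\tfrac12\hat\delta<\hat\delta$, so again $N_{R,X}(p)\in B_{\hat\delta}(\bar p)$. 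Combining the two cases yields the claimed invariance.

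There is essentially no hard obstacle here, as the statement is a bookkeeping consequence of Lemmas~\ref{le:NonSing} and~\ref{L5}; the only point requiring care is the degenerate case $p=\bar p$, where the quotient in Lemma~\ref{L5} is the indeterminate $0/0$ and the conclusion must instead be obtained directly from $X(\bar p)=0$ together with the normalization $R_{\bar p}(0_{\bar p})=\bar p$. The remaining subtlety is simply to ensure that the single radius $\hat\delta$ is chosen small enough to meet all constraints at once ($\hat\delta<\delta_{\bar p}$, containment in $\Omega$, nonsingularity of $\nabla X$, and the contraction inequality).
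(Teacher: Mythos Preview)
Your proposal is correct and follows exactly the approach the paper indicates: the authors state that the proof is a direct consequence of Lemma~\ref{le:NonSing} and Lemma~\ref{L5}, and your argument does precisely this, using Lemma~\ref{le:NonSing} for nonsingularity on a ball and the superlinear limit of Lemma~\ref{L5} to obtain the contraction $d(N_{R,X}(p),\bar p)\le \tfrac12 d(p,\bar p)$, with the degenerate case $p=\bar p$ handled via $R_{\bar p}(0_{\bar p})=\bar p$. There is nothing to add.
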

The main result this section is presented as follow. It is a generalization of \cite[Theorem 3.1]{FernandesAndFerreiraAndYuan2017} for a general retraction. Its proof can be made by adaptations of the ideia used in that result. For this reason, we do not present it here.
\begin{theorem}\label{Theorem of Newton}
Let $\mathbb{M}$ be a Riemannian manifold with a retraction $R$ and $\Omega\subset\mathbb{M}$ be an open set. Let $X:\Omega\to T\mathbb{M}$ be a differentiable vector field and ${\bar p}\in\Omega$. Consider the Newton sequence $\{p_k\}$ generated by Algorithm~\ref{AL:NewtonPuro}. Suppose that ${\bar p}$ is a singularity of $X$ and  $\nabla X$ is continuous  and   nonsingular at ${\bar p}$. Then,  there exists  $\bar{\delta}>0$ such that,  for  all  $p_{0}\in B_{\bar{\delta}}({\bar p})$, the sequence $\{p_k\}$ is well defined, contained in $B_{\bar{\delta}}({\bar p})$, and it converges superlinearly to ${\bar p}$.
\end{theorem}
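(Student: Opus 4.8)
The plan is to show that the three preparatory lemmas already assemble into the theorem once a single radius $\bar\delta$ is fixed appropriately, so that the iteration $p_{k+1}=R_{p_k}v_k$ of \eqref{eq:NM12PURO} coincides with the Newton iterate mapping $N_{R,X}$ of \eqref{Newton's iterate mapping} and inherits its contraction properties. First I would fix a contraction factor, say $\rho=1/2$ (any $\rho\in(0,1)$ works). By Lemma~\ref{L5} there is a radius $\delta_1>0$ such that $d(N_{R,X}(p),\bar p)\le \rho\,d(p,\bar p)$ for every $p\in B_{\delta_1}(\bar p)$. Independently, Lemma~\ref{le:NonSing} (equivalently the construction of $\hat\delta$ in Lemma~\ref{L:InvNx}) furnishes a radius on which the ball is contained in $\Omega$ and $\nabla X(p)$ is nonsingular. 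Setting $\bar\delta$ to be the minimum of these radii and of $\delta_{\bar p}$ gives a single ball $B_{\bar\delta}(\bar p)$ on which, simultaneously, the Newton equation is solvable, $N_{R,X}$ is defined, and the contraction estimate holds.

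Next I would establish well-definedness and invariance by induction on $k$. Assuming $p_k\in B_{\bar\delta}(\bar p)$, the nonsingularity of $\nabla X(p_k)$ from Lemma~\ref{le:NonSing} makes \eqref{eq:EQNEWTONPURO12} uniquely solvable with $v_k=-\nabla X(p_k)^{-1}X(p_k)$, so Step~1 of Algorithm~\ref{AL:NewtonPuro} succeeds and $p_{k+1}=R_{p_k}v_k=N_{R,X}(p_k)$ is well defined. The contraction from Lemma~\ref{L5} gives $d(p_{k+1},\bar p)\le\rho\,d(p_k,\bar p)<\bar\delta$, hence $p_{k+1}\in B_{\bar\delta}(\bar p)$ and the induction closes. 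Iterating the contraction yields $d(p_k,\bar p)\le\rho^k d(p_0,\bar p)\to0$, so $\{p_k\}$ is well defined, stays in $B_{\bar\delta}(\bar p)$, and converges to $\bar p$.

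Finally I would upgrade this (linear) convergence to a superlinear rate. If $X(p_k)=0$ for some $k$, then $v_k=0$ and $p_{k+1}=R_{p_k}(0_{p_k})=p_k$; the contraction then forces $d(p_k,\bar p)\le\rho\,d(p_k,\bar p)$, i.e.\ $p_k=\bar p$, and the sequence has already reached the solution. Otherwise $p_k\ne\bar p$ for every $k$, and since $p_k\to\bar p$ I may feed the tail of the sequence back into Lemma~\ref{L5} to obtain
$$
\lim_{k\to\infty}\frac{d(p_{k+1},\bar p)}{d(p_k,\bar p)}=\lim_{k\to\infty}\frac{d(N_{R,X}(p_k),\bar p)}{d(p_k,\bar p)}=0,
$$
which is exactly superlinear convergence.

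I do not anticipate a genuine obstacle here, since the analytic heart of the argument is already contained in Lemma~\ref{L5}, whose proof rests on the first-order expansion \eqref{diferenciabilidade} of $X$ and the retraction--distance comparison \eqref{eq:qwhj1}. The only points demanding care are bookkeeping ones: choosing $\bar\delta$ small enough that containment in $\Omega$, nonsingularity of $\nabla X$, and the contraction all hold on the same ball, and isolating the degenerate case $X(p_k)=0$ so that the superlinear ratio is well defined. Everything else is a direct combination of Lemmas~\ref{le:NonSing}, \ref{L5}, and \ref{L:InvNx}.
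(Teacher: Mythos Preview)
Your proposal is correct and matches the paper's intended approach: the paper itself omits the proof, noting only that it ``can be made by adaptations of the ideas used in'' \cite[Theorem 3.1]{FernandesAndFerreiraAndYuan2017}, and sets up Lemmas~\ref{le:NonSing}, \ref{L5}, and \ref{L:InvNx} precisely so that the theorem follows by the contraction-and-induction argument you outline. Your bookkeeping (choosing a single $\bar\delta$ on which all three lemmas apply) and your handling of the degenerate case $X(p_k)=0$ are exactly the details one must supply to make that adaptation rigorous.
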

Let $X=\ \mbox{grad}\,f$, then the following result is a version of Theorem~\ref{Theorem of Newton} for finding  critical points of a twice-differentiable function.
\begin{corollary}\label{corolriodopricipal}
Let $\mathbb{M}$ be a Riemannian manifold with a retraction $R$ and $\Omega\subset\mathbb{M}$ be an open set. Let $f:\Omega\to \mathbb{R}$ be a twice-differentiable function and ${\bar p}\in\Omega$. Suppose that ${\bar p}$ is a critical point of $f$ and  $\emph{Hess}\,f$ is continuous and nonsingular at ${\bar p}$.  Then,  there exists $\bar{\delta}>0$ such that,  for  all  $p_{0}\in B_{\bar{\delta}}({\bar p})$, a sequence generated by Algorithm~\ref{AL:NewtonPuro},
\begin{equation} \label{eq:qwkejf}
p_{k+1}=R_{p_{k}}(-\emph{Hess}\,f(p_{k})^{-1}\emph{grad}\,f(p_{k})),  \qquad k=0, 1, \ldots.
\end{equation}
is well defined, contained in $B_{\bar{\delta}}({\bar p})$ and converges superlinearly to ${\bar p}$.
\end{corollary}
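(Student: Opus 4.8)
The plan is to deduce the corollary directly from Theorem~\ref{Theorem of Newton} by specializing the vector field to $X=\mbox{grad}\,f$. First I would observe that, since $f$ is twice-differentiable on $\Omega$, the gradient $\mbox{grad}\,f$ is a differentiable vector field on $\Omega$, so that $X:=\mbox{grad}\,f\in\mathcal{X}(\Omega)$ is an admissible choice in problem~\eqref{eq:TheProblem0}. The central identification is the one recorded in Section~\ref{sec:basic}: the Riemannian metric yields $\mbox{Hess}\,f\,X=\nabla_{X}\mbox{grad}\,f$ for all $X\in\mathcal{X}(\Omega)$, which means precisely that the covariant derivative of the vector field $\mbox{grad}\,f$ coincides, as a linear operator on each tangent space, with the Hessian; that is, $\nabla X(p)=\mbox{Hess}\,f(p)$ for every $p\in\Omega$.

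With this dictionary in hand, I would translate each hypothesis of the corollary into the corresponding hypothesis of Theorem~\ref{Theorem of Newton}. Saying that $\bar p$ is a critical point of $f$ is the same as $\mbox{grad}\,f(\bar p)=0$, i.e.\ $X(\bar p)=0$, so $\bar p$ is a singularity of $X$. Likewise, the continuity and nonsingularity of $\mbox{Hess}\,f$ at $\bar p$ are, via $\nabla X=\mbox{Hess}\,f$, exactly the continuity and nonsingularity of $\nabla X$ at $\bar p$. Hence all the assumptions of Theorem~\ref{Theorem of Newton} are satisfied by the choice $X=\mbox{grad}\,f$.

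It then remains to check that the iteration in \eqref{eq:qwkejf} is the one produced by Algorithm~\ref{AL:NewtonPuro} applied to $X=\mbox{grad}\,f$. Substituting $X=\mbox{grad}\,f$ and $\nabla X=\mbox{Hess}\,f$ into the Newton equation \eqref{eq:EQNEWTONPURO12} gives $\mbox{grad}\,f(p_k)+\mbox{Hess}\,f(p_k)v=0$, whose unique solution (the Hessian being nonsingular near $\bar p$ by Lemma~\ref{le:NonSing}) is $v_k=-\mbox{Hess}\,f(p_k)^{-1}\mbox{grad}\,f(p_k)$; inserting this into the update \eqref{eq:NM12PURO} reproduces \eqref{eq:qwkejf} verbatim. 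Therefore Theorem~\ref{Theorem of Newton} furnishes a radius $\bar\delta>0$ such that, for every $p_0\in B_{\bar\delta}(\bar p)$, the sequence is well defined, stays in $B_{\bar\delta}(\bar p)$, and converges superlinearly to $\bar p$, which is exactly the assertion of the corollary.

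I expect essentially no genuine obstacle here, since the result is a direct specialization; the only point requiring care is the operator identity $\nabla\,\mbox{grad}\,f=\mbox{Hess}\,f$ together with the differentiability of $\mbox{grad}\,f$, both of which are guaranteed by the twice-differentiability of $f$ and the definitions recalled in Section~\ref{sec:basic}.
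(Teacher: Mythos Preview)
Your proposal is correct and follows exactly the approach the paper intends: the corollary is stated immediately after Theorem~\ref{Theorem of Newton} with the preamble ``Let $X=\mbox{grad}\,f$, then the following result is a version of Theorem~\ref{Theorem of Newton}\ldots'', and no separate proof is given. Your identification $\nabla X=\mbox{Hess}\,f$, the translation of the hypotheses, and the verification that \eqref{eq:qwkejf} is the Newton iteration for $X=\mbox{grad}\,f$ are precisely what is required.
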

%%%%%%%%%%%%%%%%%%%%%%%%%%%
\section{Globalization of  Newton method}\label{sec:DampedNewtonMethod}
%%%%%%%%%%%%%%%%%%%%%%%%%%%
 In \cite{Bortoloti2020} has been presented a global version of the Newton method with the iterations being updated by the exponential mapping. In this section, we present a version of that method for a general retraction, see Algorithm~\ref{AL:jogo_with_retraction} below.  Our  numerical experiments in the present paper have shown that this method is quite sensitive with respect to retractions used,  having  effect on its robustness. To correct this drawback, we will also  present a new version of this method, see  Algorithm~\ref{AL:dampedRetraction2}. Besides, its  convergence analysis is carried out under weaker conditions. In order to present both  algorithms, we consider a {\it merit function}   $\varphi:\mathbb{M}\to\mathbb{R}$ associated to the vector field $X$, which is defined as
\begin{equation}\label{eq:naturalfuncaomerito}
\varphi(p)=\dfrac{1}{2}\parallel X(p)\parallel^{2}.
\end{equation}

%%%%%%%%%%%%%%%%%%%%%%%%%%%
\subsection{Damped Newton method}\label{sec:DampedNewtonMethod1}
%%%%%%%%%%%%%%%%%%%%%%%%%%%
In the following we present a version of the algorithm introduced in \cite{Bortoloti2020}  for a general retraction. This algorithm is similar to the one presented in \cite{Bortoloti2020}, for the sake of completeness and support in the next section, we have included it here.The formal statement  of the algorithm is as follows. 
\noindent
\\
\hrule
\begin{algorithm} \label{AL:jogo_with_retraction}
{\bf Damped Newton method\\}
\hrule
\begin{description}
\item[Step 0.] Choose a scalar $\sigma\in(0,1/2)$, take an initial point $p_0 \in \mathbb{M}$, and set $k=0$;
\item[Step 1.] Compute {\it search direction} $v_{k}\in T_{p_{k}}\mathbb{M}$ as a solution of the linear equation
\begin{equation}\label{eq:Newton1}
X(p_{k})+\nabla X(p_{k})v=0.
\end{equation}
\hspace{.6cm }If $v_{k}$ exists go to {\bf Step $2$}. Otherwise, set the search direction as  $v_{k}=  -\grad\,\varphi(p_{k})$,  where $\varphi$ is defined by \eqref{eq:naturalfuncaomerito}, i.e.,  
\begin{equation}\label{eq:DirectionOfGradiente}
v_{k}=-\nabla X(p_{k})^{*}X(p_{k}).
\end{equation}
\hspace{.5cm} If $v_{k}=0$, stop.
\item[Step 2.] Compute the {\it stepsize} by the rule 
\begin{equation}\label{eq:armijo1}
    \alpha_k:=\max \left\{ 2^{-j}~: ~\varphi\left(R_{p_k}(2^{-j} v_{k})\right)\leq \varphi(p_k)+\sigma 2^{-j}\left\langle \grad\,\varphi(p_k), ~ v_{k} \right\rangle, ~j\in \mathbb{N} \right\};
\end{equation}
\hspace{.6cm} and set the {\it next iterated}  as 
\begin{equation} \label{eq:NM12331}
p_{k+1}:=R_{p_{k}}(\alpha_{k}v_{k});
\end{equation}
\item[Step 3.] Set $k\leftarrow k+1$ and go to {\bf Step 1}.
\end{description}
\hrule
\end{algorithm}
\vspace{0.3cm}

We can see that Algorithm~\ref{AL:jogo_with_retraction} is a generalized version of the algorithm considered in \cite{Bortoloti2020}, by using a general retraction in {\bf Step 2}.   To analyze a sequence generated by the method studied in \cite{Bortoloti2020} it   was necessary  to assume  nonsingularty  of covariant derivative of the vector field  at its cluster points. This assumption is also required  here,  it allows us  to obtain the same result of the ones obtained in \cite{Bortoloti2020}.  Next we state the convergence theorem  to sequence generated by Algorithm~\ref{AL:jogo_with_retraction}.  Since its proof can be made by adaptations of the ideia used in   \cite{Bortoloti2020} to general retraction, we do not present it here.

\begin{theorem}\label{Th:dampedRetraction1}
Let $\mathbb{M}$ be a Riemannian manifold,  $\Omega\subset\mathbb{M}$ be an open set and  $X:\Omega\to T\mathbb{M}$ be a differentiable vector field.  Take  $R$ a retraction in $\mathbb{M}$.  Assume $\left\{p_{k}\right\}$, generated by  Algorithm~\ref{AL:jogo_with_retraction}, has  an accumulation point   $\bar p\in \Omega$ and   $\nabla X$ is  continuous and  nonsingular at $\bar p$. Then, $\left\{p_{k}\right\}$  converges superlinearly to $\bar p$ and is a  singularity of $X$.  
\end{theorem}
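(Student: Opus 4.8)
The plan is to argue in two phases. In the \emph{global phase} I establish that the accumulation point $\bar p$ is a singularity of $X$, relying only on the descent properties of the merit function $\varphi$ defined in \eqref{eq:naturalfuncaomerito} together with the Armijo rule \eqref{eq:armijo1}. In the \emph{local phase} I show that the tail of the sequence coincides with the pure Newton iteration of Algorithm~\ref{AL:NewtonPuro}, so that the superlinear rate of Theorem~\ref{Theorem of Newton} is inherited. First I would record that $\grad\,\varphi(p)=\nabla X(p)^{*}X(p)$, which makes both admissible directions descent directions: the Newton direction $v_k=-\nabla X(p_k)^{-1}X(p_k)$ gives $\langle\grad\,\varphi(p_k),v_k\rangle=-\|X(p_k)\|^{2}=-2\varphi(p_k)$, while the fallback direction $v_k=-\grad\,\varphi(p_k)$ gives $\langle\grad\,\varphi(p_k),v_k\rangle=-\|\grad\,\varphi(p_k)\|^{2}$, both strictly negative unless the algorithm has stopped. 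By property (ii) of Definition~\ref{Df:nqc} the curve $t\mapsto\varphi(R_{p_k}(tv_k))$ has derivative $\langle\grad\,\varphi(p_k),v_k\rangle<0$ at $t=0$, so a sufficiently small dyadic stepsize passes the test \eqref{eq:armijo1}; hence $\alpha_k$ is well defined and $\{\varphi(p_k)\}$ is nonincreasing. Being bounded below by $0$ it converges, and since $p_{k_j}\to\bar p$ along a subsequence, continuity forces $\varphi(p_k)\to\varphi(\bar p)$.

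For the global phase I would run the standard line-search stationarity argument along $p_{k_j}\to\bar p$. Summing the Armijo decrease $\varphi(p_k)-\varphi(p_{k+1})\geq-\sigma\alpha_k\langle\grad\,\varphi(p_k),v_k\rangle\geq0$ and using convergence of $\{\varphi(p_k)\}$ yields $\alpha_k\langle\grad\,\varphi(p_k),v_k\rangle\to0$. A case split on whether $\alpha_{k_j}$ stays bounded away from $0$: if so, $\langle\grad\,\varphi(p_k),v_k\rangle\to0$ directly; if not, a rejected trial step $2^{-(j-1)}$ violates \eqref{eq:armijo1}, and a first-order expansion of $\varphi\circ R_{p_k}$ obtained from \eqref{diferenciabilidade} and Proposition~\ref{lm:kwqdklu} forces $(1-\sigma)\langle\grad\,\varphi(\bar p),\bar v\rangle\geq0$ in the limit, where $\bar v$ is the limiting direction. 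In either case, combined with $\langle\grad\,\varphi(p_k),v_k\rangle\leq-\|\grad\,\varphi(p_k)\|^{2}$ for the fallback branch (and $=-2\varphi(p_k)$ for the Newton branch), this gives $\grad\,\varphi(\bar p)=\nabla X(\bar p)^{*}X(\bar p)=0$. Since $\nabla X(\bar p)$ is nonsingular, so is its adjoint, whence $X(\bar p)=0$ and $\bar p$ is a singularity of $X$.

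Finally, the local phase. By Lemma~\ref{le:NonSing} there is a ball $B_{\bar\delta}(\bar p)$ on which $\nabla X$ is nonsingular, so once $p_k\in B_{\bar\delta}(\bar p)$ the Newton equation \eqref{eq:Newton1} is solvable and the Newton direction is the one selected. The decisive step is to show that the \emph{unit} step $\alpha_k=1$ is eventually accepted: expanding $\varphi(R_{p_k}(v_k))=\tfrac12\|X(R_{p_k}v_k)\|^{2}$ via \eqref{diferenciabilidade} and the superlinear contraction of Lemma~\ref{L5} gives $\varphi(R_{p_k}v_k)=o(\varphi(p_k))$ as $p_k\to\bar p$, so that $\varphi(R_{p_k}v_k)\leq(1-2\sigma)\varphi(p_k)=\varphi(p_k)+\sigma\langle\grad\,\varphi(p_k),v_k\rangle$ for every $\sigma<1/2$ and all $p_k$ close enough to $\bar p$. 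Shrinking $\bar\delta$ to also lie inside the invariant ball $B_{\hat\delta}(\bar p)$ of Lemma~\ref{L:InvNx} and inside the convergence ball of Theorem~\ref{Theorem of Newton}, I note that because $\bar p$ is an accumulation point some iterate $p_{k_0}$ enters this ball; from $p_{k_0}$ on, unit steps are accepted, the iteration reduces to $p_{k+1}=N_{R,X}(p_k)$, the iterates remain in the ball by invariance, and Theorem~\ref{Theorem of Newton} upgrades this to superlinear convergence of the whole sequence to $\bar p$.

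The main obstacle I expect is precisely the asymptotic acceptance of the unit step under a \emph{general} retraction rather than the exponential map: the quadratic remainder of $\varphi\circ R_{p_k}$ must be controlled through the comparison constants $a_0,a_1$ of Proposition~\ref{lm:kwqdklu} and the retraction-based expansion \eqref{diferenciabilidade}, so that the $o(\varphi(p_k))$ estimate holds uniformly as $p_k\to\bar p$. This is the one place where the passage from the exponential map of \cite{Bortoloti2020} to an arbitrary retraction is nontrivial, and it is also what guarantees the two phases splice together into convergence of the entire sequence rather than a mere subsequence.
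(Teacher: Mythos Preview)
Your proposal is correct and follows the same two-phase strategy the paper uses: although the paper omits the proof of Theorem~\ref{Th:dampedRetraction1} (deferring to an adaptation of \cite{Bortoloti2020}), the argument you sketch---Armijo descent forcing $\grad\varphi(\bar p)=0$, nonsingularity then giving $X(\bar p)=0$, and eventual unit-step acceptance via $\varphi(N_{R,X}(p))=o(\|X(p)\|^{2})$ feeding into Theorem~\ref{Theorem of Newton}---is precisely what the paper develops in Lemmas~\ref{L:InvNx} and~\ref{Le:SupLinConPhi} and deploys verbatim in the proof of the parallel Theorem~\ref{theoremAuxiliary123}. Your identification of the unit-step acceptance under a general retraction as the one nontrivial adaptation is spot on; that is exactly the content of Lemma~\ref{Le:SupLinConPhi}.
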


It is well known that the superliner rate of Newton method just can be reached when the covariant derivative is nonsingular at the solution. However, this assumption is not necessary to obtain convergence of the damped Newton method.  On the other hand, the  equation  \eqref{eq:Newton1} at points far away a singularity may has more than one solution. In that case, a sufficient decreasing  of the merit function is not guaranteed, which implies  a great computational effort  of  linear search in {\bf Step~2}.  Consequently,   the robustness of the method is affected. In the next section, we will present a condition that excludes those solutions of  \eqref{eq:Newton1} that do not ensure sufficient decreasing  of the merit function.

%%%%%%%%%%%%%%%%%%%%%%%%%%%
\subsection{Modified damped Newton method}\label{sec:DampedNewtonMethod2}
%%%%%%%%%%%%%%%%%%%%%%%%%%%
In this section, we state the main algorithm of present paper and its  global convergence analysis.  This algorithm has an extra condition on the Newtonian direction in order to improve the  theoretical results  and numerical performance of Algorithm~\ref{AL:jogo_with_retraction}. The statement of the algorithm is as follows.

\noindent
\\
\hrule
\begin{algorithm} \label{AL:dampedRetraction2}
{\bf Modified damped Newton method\\}

\hrule
\begin{description}
\item[Step 0.] Choose a scalar $\sigma\in(0,1/2)$, $\theta\in[0,1]$, take an initial point $p_0 \in \mathbb{M}$, and set $k=0$;
\item[Step 1.] Compute {\it search direction} $v_{k}\in T_{p_{k}}\mathbb{M}$ as a solution of the linear equation
\begin{equation}\label{eq:Newton}
X(p_{k})+\nabla X(p_{k})v=0.
\end{equation}
\hspace{.6cm }If $v_{k}$ exists and

\begin{equation}\label{eq:NewtonDirectionCondction}
  \langle \grad\,\varphi(p_{k}), v_{k} \rangle \leq -{\theta}\|\grad\, \varphi(p_{k})\| \| v_{k} \|
\end{equation}
go to {\bf Step $2$}. Otherwise, set the search direction as  $v_{k}=  -\grad\,\varphi(p_{k})$,  where $\varphi$ is defined by \eqref{eq:naturalfuncaomerito}, i.e.,  
\begin{equation}\label{eq:DirectionOfGradiente}
v_{k}=-\nabla X(p_{k})^{*}X(p_{k}).
\end{equation}
\hspace{.5cm} If $v_{k}=0$, stop.
\item[Step 2.] Compute the {\it stepsize} by the rule 
\begin{equation}\label{eq:armijo}
    \alpha_k:=\max \left\{ 2^{-j}~: ~\varphi\left(R_{p_k}(2^{-j} v_{k})\right)\leq \varphi(p_k)+\sigma 2^{-j}\left\langle \grad\,\varphi(p_k), ~ v_{k} \right\rangle, ~j\in \mathbb{N} \right\};
\end{equation}
\hspace{.6cm} and set the {\it next iterated}  as 
\begin{equation} \label{eq:NM1233}
p_{k+1}:=R_{p_{k}}(\alpha_{k}v_{k});
\end{equation}
\item[Step 3.] Set $k\leftarrow k+1$ and go to {\bf Step 1}.
\end{description}
\hrule
\end{algorithm}
\vspace{0.3cm}

 Let us  describe the main features of Algorithm~\ref{AL:dampedRetraction2}.   We first compute $v_k$ a solution of  \eqref{eq:Newton}  if any,  and then we  check if it satisfies \eqref{eq:NewtonDirectionCondction}. In this case, we use it as a search direction in {\bf Step 2}. On the other hand, if $v_k$ does not satisfy either \eqref{eq:Newton} or \eqref{eq:NewtonDirectionCondction}, then we set the steepest descent direction $v_{k}=  -\grad\,\varphi(p_{k})$ as the search direction in {\bf Step~2}. In fact,     is the steepest descent  direction for the merit function \eqref{eq:naturalfuncaomerito}. Finally, we use the Armijo's  linear search  \eqref{eq:armijo} to compute a step-size $\alpha_k$. Then,  for a given retraction fixed previously,  from the current $p_k$  we compute the next iterated $p_{k+1}$   by   \eqref{eq:NM1233}. 
 \begin{remark} 
We point out for $\theta=0$,  Algorithm~\ref{AL:dampedRetraction2}  with the retraction being the exponential map  retrieve the algorithm considered in \cite{Bortoloti2020}. Indeed, if  $v_k\neq 0$ satisfies  \eqref{eq:Newton}, then  we have   
$$
\langle \grad\,\varphi(p_{k}), v_{k} \rangle=-\|X(p_k)\|^2 <0,
$$
which  implies  that  the condition  \eqref{eq:NewtonDirectionCondction}  holds trivially  for $\theta=0$.   Also, note that the condition $\theta=1$ it is the most restrictive among all those in the range $[0, 1]$. Because,  the condition \eqref{eq:NewtonDirectionCondction} happens only when $\grad\,\varphi(p_{k})$ and $v_{k}$ are collinear. 
\end{remark}
Before studying  the properties of  the sequence generated by the Algorithm \ref{AL:dampedRetraction2} it is need  some preliminaries results. 
 We begin  with a  useful result for  establishing   the well-definition of this sequence, which the proof can be found in \cite[Lemma 3]{Bortoloti2020}. 
 \begin{lemma}\label{L:DescDir}
Let $p\in \Omega$ such that  $X(p)\neq 0$.  Assume that   $v=-\nabla X(p)^{*}X(p)$ or  that  $v$   is a solution of the  linear equation 
$X(p)+\nabla X(p)v=0.$ If $v\neq 0$, then $ \left\langle\grad\,\varphi(p),  v\right\rangle <0$ .
\end{lemma}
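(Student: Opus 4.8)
The plan is to reduce the claim to a single identity --- that the gradient of the merit function satisfies $\grad\,\varphi(p)=\nabla X(p)^{*}X(p)$ --- and then to verify the strict inequality separately in the two admissible cases for $v$.

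First I would compute $\grad\,\varphi(p)$. Writing $\varphi=\tfrac12\langle X,X\rangle$ and differentiating along an arbitrary $Y\in{\cal X}(\Omega)$, the compatibility of the Levi-Civita connection $\nabla$ with the metric gives $d\varphi(Y)=\tfrac12\,Y\langle X,X\rangle=\langle\nabla_{Y}X,X\rangle=\langle\nabla X(p)Y,\,X(p)\rangle$. By the definition of the adjoint of the linear map $\nabla X(p)\colon T_{p}\mathbb{M}\to T_{p}\mathbb{M}$ with respect to $\langle\cdot,\cdot\rangle$, the right-hand side equals $\langle Y,\,\nabla X(p)^{*}X(p)\rangle$. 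Comparing this with the defining relation $d\varphi(Y)=\langle\grad\,\varphi(p),Y\rangle$ and using that $Y$ is arbitrary yields $\grad\,\varphi(p)=\nabla X(p)^{*}X(p)$.

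With this identity the two cases are immediate. If $v=-\nabla X(p)^{*}X(p)=-\grad\,\varphi(p)$, then $\langle\grad\,\varphi(p),v\rangle=-\|\grad\,\varphi(p)\|^{2}=-\|v\|^{2}$, which is strictly negative because $v\neq0$. If instead $v$ is a solution of $\nabla X(p)v=-X(p)$, then, using the identity for $\grad\,\varphi(p)$ together with the adjoint relation once more, $\langle\grad\,\varphi(p),v\rangle=\langle\nabla X(p)^{*}X(p),v\rangle=\langle X(p),\nabla X(p)v\rangle=-\|X(p)\|^{2}$, which is strictly negative because $X(p)\neq0$.

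I do not expect a genuine obstacle here. The only step requiring real care is the derivation of $\grad\,\varphi(p)$, where one must invoke metric compatibility correctly and track the adjoint; once that identity is established, each case collapses to a one-line computation. For completeness I would also note that this is precisely the content cited as \cite[Lemma 3]{Bortoloti2020}, since the statement concerns only the merit function $\varphi$ and the covariant derivative $\nabla X$ at the single point $p$, and the argument involves no retraction at all.
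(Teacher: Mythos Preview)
Your argument is correct and is exactly the standard one: the paper does not prove this lemma in-line but defers to \cite[Lemma~3]{Bortoloti2020}, and the identity $\grad\,\varphi(p)=\nabla X(p)^{*}X(p)$ you derive is precisely what the paper itself uses repeatedly (see the proofs of Lemma~\ref{Le:AdTesteNew} and Theorem~\ref{theoremAuxiliary123}). Nothing further is needed.
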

Under suitable assumptions  the  following result  guarantees that the conditions \eqref{eq:Newton} and \eqref{eq:NewtonDirectionCondction} are  satisfied  in a neighbourhood  of a point where the covariant derivative is nonsingular. 
\begin{lemma}\label{Le:AdTesteNew}
 If $\nabla X$ is continuous at $\bar p\in \Omega$ and $\nabla X(\bar p)$ is nonsingular,  then  there exists $0<\hat{\delta}<\delta_{\bar{p}}$ such that $B_{\hat{\delta}}(\bar{p})\subset \Omega$, $\nabla X(p)$ is nonsingular  for  $p\in B_{\hat{\delta}}(\bar{p})$.  Moreover, for all  $p\in B_{\hat{\delta}}(\bar{p})$  the vector $v=-\nabla X(p)^{-1}X(p)$ is the  unique solution of the linear equation  $X(p)+\nabla X(p)v=0$   and,   for  $0\leq \theta < 1/{\mbox{cond}\, (\nabla X({\bar p}))}$,  there holds
\begin{equation}\label{eq:AdTesteNew}
\langle \grad\,\varphi(p), v \rangle \leq -{\theta}\|\grad\, \varphi(p)\| \| v\|, \qquad \forall ~p\in B_{\hat{\delta}}(\bar{p}).
\end{equation}
\end{lemma}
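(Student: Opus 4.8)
The plan is to dispatch the structural claims (nonsingularity and uniqueness of the Newton direction) directly from Lemma~\ref{le:NonSing}, and then to reduce the descent inequality \eqref{eq:AdTesteNew} to a single bound on the condition number that persists near $\bar p$ by a continuity argument.

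First I would invoke Lemma~\ref{le:NonSing}: since $\nabla X$ is continuous at $\bar p$ and $\nabla X(\bar p)$ is nonsingular, there is a radius $\bar\delta\in(0,\delta_{\bar p})$ with $B_{\bar\delta}(\bar p)\subset\Omega$ on which $\nabla X(p)$ is nonsingular and $\|\nabla X(p)^{-1}\|\le 2\|\nabla X(\bar p)^{-1}\|$. Nonsingularity of $\nabla X(p)$ immediately yields that $v=-\nabla X(p)^{-1}X(p)$ is the unique solution of $X(p)+\nabla X(p)v=0$, which settles every assertion except the inequality. I would take the final $\hat\delta$ to be no larger than $\bar\delta$, so these properties are automatically inherited.

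Next comes the algebraic core. Since $\varphi(p)=\tfrac12\|X(p)\|^2$, differentiating and using $\mbox{Hess}$-type rules gives $\grad\varphi(p)=\nabla X(p)^{*}X(p)$, the identity already used implicitly in \eqref{eq:DirectionOfGradiente}. Substituting $v=-\nabla X(p)^{-1}X(p)$ and applying the defining property of the adjoint, one computes
\[
\langle \grad\varphi(p),\, v\rangle \;=\; -\langle X(p),\, X(p)\rangle \;=\; -\|X(p)\|^{2}.
\]
Thus the target inequality is equivalent to $\theta\,\|\grad\varphi(p)\|\,\|v\|\le \|X(p)\|^{2}$. Bounding the factors via $\|\grad\varphi(p)\|\le\|\nabla X(p)\|\,\|X(p)\|$ (using $\|\nabla X(p)^{*}\|=\|\nabla X(p)\|$) and $\|v\|\le\|\nabla X(p)^{-1}\|\,\|X(p)\|$, the product is at most $\mathrm{cond}(\nabla X(p))\,\|X(p)\|^{2}$. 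Hence it suffices to guarantee $\theta\,\mathrm{cond}(\nabla X(p))\le 1$ throughout the ball.

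The last step, and the only genuine subtlety, is to show the condition number is continuous at $\bar p$ so that the strict hypothesis $\theta<1/\mathrm{cond}(\nabla X(\bar p))$ survives on a small ball. The care needed is that $\nabla X(p)$ and $\nabla X(\bar p)$ act on different tangent spaces, so I would compare them through the parallel-transport conjugation $P_{p\bar p}\nabla X(p)P_{\bar p p}$. Because parallel transport is an isometry, $\|\nabla X(p)\|=\|P_{p\bar p}\nabla X(p)P_{\bar p p}\|$, and likewise for the inverse (noting $P_{p\bar p}\nabla X(p)^{-1}P_{\bar p p}$ is the inverse of the conjugated operator); Lemma~\ref{le:NonSing} gives $P_{p\bar p}\nabla X(p)P_{\bar p p}\to\nabla X(\bar p)$ in operator norm, and inversion is continuous at nonsingular operators, so $\mathrm{cond}(\nabla X(p))\to\mathrm{cond}(\nabla X(\bar p))$ as $p\to\bar p$. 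Since $\theta\,\mathrm{cond}(\nabla X(\bar p))<1$, continuity furnishes $\hat\delta\in(0,\bar\delta)$ with $\theta\,\mathrm{cond}(\nabla X(p))\le 1$ for all $p\in B_{\hat\delta}(\bar p)$. Combining this with the identity above gives $\langle\grad\varphi(p),v\rangle=-\|X(p)\|^{2}\le-\theta\|\grad\varphi(p)\|\,\|v\|$, as required; when $\theta=0$ the inequality degenerates to $\langle\grad\varphi(p),v\rangle\le 0$, which holds with no further work.
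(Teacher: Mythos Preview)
Your proof is correct and follows essentially the same approach as the paper: both use Lemma~\ref{le:NonSing} for the nonsingularity claims, compute $\langle\grad\varphi(p),v\rangle=-\|X(p)\|^{2}$, bound $\|\grad\varphi(p)\|\,\|v\|\le\mathrm{cond}(\nabla X(p))\,\|X(p)\|^{2}$, and then invoke continuity of $\mathrm{cond}(\nabla X(\cdot))$ at $\bar p$ to pass from the strict hypothesis at $\bar p$ to a uniform bound on a small ball. Your treatment of the continuity step via parallel-transport conjugation is in fact slightly more explicit than the paper's, which simply writes the limit without justifying how operators on different tangent spaces are compared.
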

\begin{proof}
The first part of the proof follows from Lemma~\ref{le:NonSing}.  Whenever   $X(p)=0$,  the inequality \eqref{eq:AdTesteNew} holds. Assume that  $X(p)\neq 0$, for all $p\in B_{\hat{\delta}}(\bar{p})$.  Letting  $p\in B_{{\bar \delta}}(\bar{p})$,  we obtain  that   $\nabla X(p)$ is nonsingular. Thus,  $v=-\nabla X(p)^{-1}X(p)$ is the  unique solution of   $X(p)+\nabla X(p)v=0$, and due to  $\nabla \varphi(p)=\nabla X(p)^{*}X(p)$  and $\|\nabla X(p)^{*}\|=\|\nabla X(p)\|$ we conclude that 
$$
  \frac{\langle \grad\,\varphi(p), -v \rangle}{\|\grad\, \varphi(p)\| \| v\| }  = \frac{\|X(p)\|}{\|\nabla X(p)^{*}X(p)\| \| \nabla X(p)^{-1}X(p)\|} \geq  \frac{1}{\|\nabla X(p)^{*}\| \| \nabla X(p)^{-1}\|}=\frac{1}{\mbox{cond}\, (\nabla X({p}))}.    
$$
Hence, taking into account that  $0\leq \theta < 1/{\mbox{cond}\, (\nabla X({\bar p}))}$, we conclude from the last inequality that 
$$
 \lim_{p\to {\bar p}} \frac{\langle \grad\,\varphi(p), -v \rangle}{\|\grad\, \varphi(p)\| \| v\| }   \geq  \frac{1}{\mbox{cond}(\nabla X({\bar p}))}>\theta.
$$
Therefore, there exists  ${\hat{\delta}}<{{\bar \delta}}$ such that  ${\langle \grad\,\varphi(p), -v \rangle/\|\grad\, \varphi(p)\| \| v\| } >\theta$, for all $p\in B_{\hat{\delta}}(\bar{p})$, and  \eqref{eq:AdTesteNew} also holds for $X(p)\neq 0$. 
\end{proof}
Next  lemma shows, in particular, that for $k$ sufficiently large $\alpha_k\equiv 1$  given by \eqref{eq:armijo}. Consequently,  \eqref{eq:NM1233} becomes the  Newton iteration \eqref{eq:NM12PURO}. 
\begin{lemma}\label{Le:SupLinConPhi}
Let  $\bar{p}\in\mathbb{M}$ such that $X(\bar p)=0$.  If $\nabla X$ is continuous at $\bar p$ and $\nabla X(\bar p)$ is nonsingular,  then  there exists $0<\hat{\delta}<\delta_{\bar{p}}$ such that $B_{\hat{\delta}}(\bar{p})\subset \Omega$, $\nabla X(p)$ is nonsingular  for each $p\in B_{\hat{\delta}}(\bar{p})$ and 
\begin{equation}\label{eq:SupelMerit}
\lim_{p\to \bar p}\frac{\varphi\left(N_{R,X}(p)\right)}{\parallel X(p)\parallel^{2}}=0.
\end{equation}
As a consequence, there exists a $\delta>0$ such that,  for all  $\sigma\in(0,1/2)$ and $\delta< \hat{\delta}$  there holds
\begin{equation}\label{eq:ArmijoCond}
\varphi\left(N_{R,X}(p)\right) \leq \varphi\left((p)\right)+ \sigma \left\langle\grad\,\varphi(p), -\nabla X(p)^{-1}X(p)\right\rangle, \qquad  \forall~p\in B_{\delta}(\bar{p}).
\end{equation}
\end{lemma}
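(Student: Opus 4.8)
The plan is to first dispose of the structural assertions and then reduce the limit \eqref{eq:SupelMerit} to Lemma~\ref{L5}. The existence of $\hat{\delta}$ with $B_{\hat{\delta}}(\bar p)\subset\Omega$ and $\nabla X(p)$ nonsingular on $B_{\hat{\delta}}(\bar p)$ is immediate from Lemma~\ref{le:NonSing}; shrinking $\hat{\delta}$ if necessary, Lemma~\ref{L:InvNx} lets me also assume that $N_{R,X}$ maps $B_{\hat{\delta}}(\bar p)$ into itself, so that $N_{R,X}(p)$ always lies in the region where my estimates are valid. Since $\varphi(q)=\tfrac12\|X(q)\|^2$, the limit \eqref{eq:SupelMerit} is equivalent to $\|X(N_{R,X}(p))\|/\|X(p)\|\to 0$ as $p\to\bar p$.

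The core of the argument is a two-sided comparison between $\|X(q)\|$ and $d(q,\bar p)$ for $q$ near $\bar p$. Putting $X(\bar p)=0$ in the expansion \eqref{diferenciabilidade} gives $X(q)=P_{\bar p q}\nabla X(\bar p)R^{-1}_{\bar p}q+\|R^{-1}_{\bar p}q\|\,r(q)$ with $r(q)\to 0$. Because parallel transport is an isometry and $\nabla X(\bar p)$ is nonsingular (so that $\|\nabla X(\bar p)R^{-1}_{\bar p}q\|\ge\|\nabla X(\bar p)^{-1}\|^{-1}\|R^{-1}_{\bar p}q\|$), for $q$ close enough to $\bar p$ the remainder is dominated by the leading term, yielding constants $0<c_1\le c_2$ with
\[
c_1\|R^{-1}_{\bar p}q\|\le \|X(q)\|\le c_2\|R^{-1}_{\bar p}q\|.
\]
Combining this with \eqref{eq:qwhj1}, which compares $\|R^{-1}_{\bar p}q\|$ with $d(\bar p,q)$, I obtain $\tilde c_1\,d(q,\bar p)\le\|X(q)\|\le\tilde c_2\,d(q,\bar p)$. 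Applying the lower bound at $p$ and the upper bound at $N_{R,X}(p)$ then gives
\[
\frac{\|X(N_{R,X}(p))\|}{\|X(p)\|}\le\frac{\tilde c_2}{\tilde c_1}\,\frac{d(N_{R,X}(p),\bar p)}{d(p,\bar p)},
\]
whose right-hand side tends to $0$ by Lemma~\ref{L5}; squaring establishes \eqref{eq:SupelMerit}.

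For the consequence \eqref{eq:ArmijoCond}, I would use $\grad\,\varphi(p)=\nabla X(p)^{*}X(p)$ to compute, for the Newton direction $v=-\nabla X(p)^{-1}X(p)$,
\[
\langle\grad\,\varphi(p),v\rangle=-\langle X(p),X(p)\rangle=-\|X(p)\|^2=-2\varphi(p),
\]
so the right-hand side of \eqref{eq:ArmijoCond} equals $(1-2\sigma)\varphi(p)$. Now \eqref{eq:SupelMerit} can be rewritten as $\varphi(N_{R,X}(p))/\varphi(p)\to 0$: the lower bound just obtained shows $\bar p$ is the only zero of $X$ in $B_{\hat{\delta}}(\bar p)$, so $\varphi(p)>0$ for $p\neq\bar p$, while at $p=\bar p$ one has $v=0$ and \eqref{eq:ArmijoCond} holds with equality. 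Since $1-2\sigma>0$ for every $\sigma\in(0,1/2)$, given such a $\sigma$ there is a $\delta<\hat{\delta}$ with $\varphi(N_{R,X}(p))\le(1-2\sigma)\varphi(p)$ on $B_{\delta}(\bar p)$, which is precisely \eqref{eq:ArmijoCond}.

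I expect the main obstacle to be the two-sided bound $\tilde c_1 d(q,\bar p)\le\|X(q)\|\le\tilde c_2 d(q,\bar p)$: one must honestly control the remainder $\|R^{-1}_{\bar p}q\|\,r(q)$ against the invertible leading term and keep careful track of which neighborhood each inequality requires, together with the fact that $N_{R,X}(p)$ must also sit in that neighborhood, which is where Lemma~\ref{L:InvNx} enters. Everything else is a routine assembly of Lemma~\ref{L5}, the isometry of parallel transport, and the identity for $\langle\grad\,\varphi(p),v\rangle$.
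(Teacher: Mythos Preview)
Your proposal is correct and follows essentially the same approach as the paper: both reduce \eqref{eq:SupelMerit} to Lemma~\ref{L5} by sandwiching $\|X(q)\|$ between constant multiples of $d(q,\bar p)$ via the expansion \eqref{diferenciabilidade}, the nonsingularity of $\nabla X(\bar p)$, and \eqref{eq:qwhj1}, and both derive \eqref{eq:ArmijoCond} from the identity $\langle\grad\,\varphi(p),-\nabla X(p)^{-1}X(p)\rangle=-\|X(p)\|^2$. The only difference is organizational: you state the two-sided bound $\tilde c_1 d(q,\bar p)\le\|X(q)\|\le\tilde c_2 d(q,\bar p)$ once and apply it at $p$ and at $N_{R,X}(p)$, whereas the paper derives the upper estimate for $\varphi(N_{R,X}(p))$ and the lower estimate for $\|X(p)\|$ in two separate computations with explicit constants.
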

\begin{proof}
Using  Lemma~\ref{le:NonSing} we obtain that  there exists $0<\hat{\delta}<\delta_{\bar{p}}$ such that $B_{\hat{\delta}}(\bar{p})\subset \Omega$ and $\nabla X(p)$ is nonsingular  for each $p\in B_{\hat{\delta}}(\bar{p})$.  We proceed to prove \eqref{eq:SupelMerit}.  To simply the notation we define
\begin{equation}\nonumber%\label{eq:Nstep}
v_{p}=   -\nabla X(p)^{-1}X(p), \qquad p\in B_{\hat{\delta}}(\bar{p}).
\end{equation}
Since $X(\bar p)=0$  and $\nabla X$ is continuous at $\bar p$ and nonsingular  we have ${\lim_ {p\to \bar{p}}}v_{p}=0$. Moreover, since the parallel transport is an isometry and taking into account \eqref{eq:qwhj1} and \eqref{diferenciabilidade} we can conclude that
\begin{equation}\nonumber
\varphi\left(N_{R,X}(p)\right)=  \dfrac{1}{2}\parallel X\left(R_{p}v_p\right)-P_{\bar p R_{p}v}X(\bar p)\parallel^{2}\leq \frac{1}{2a_0^2}\left(\| \nabla X(\bar{p})\| + \| r(R_{p}v)\|\right)^2 d^2(R_{p}v, \bar{p}).
\end{equation}
Hence, after some simples  algebraic manipulations we can conclude  from the last inequality that 
\begin{equation}\label{sssss}
\dfrac{\varphi\left(N_{R,X}(p)\right)}{\parallel X(p)\parallel^{2}} \leq  \frac{1}{2a_0^2} \left(\| \nabla X(\bar{p})\| + \| r(R_{p}v_p)\|\right)^2 \dfrac{ d^2(R_{p}v, \bar{p})}{d^{2}(p, \bar p)}\dfrac{d^{2}(p, \bar p)}{\parallel X(p)\parallel^{2}}, \quad  \forall ~p\in B_{\hat{\delta}}(\bar{p})\backslash\{\bar{p}\}.
\end{equation}
On the other hand,  owing that  $X(\bar p)=0$ and $\nabla X(\bar p)$ is nonsingular, it is easy to see that 
\begin{equation}\label{eq:AAABBB}
\|R^{-1}_{\bar p}p\|\leq \left\| \nabla X(\bar p)^{-1}\left[P_{p\bar{p}}X(p)- X(\bar p)- \nabla X(\bar p)R^{-1}_{\bar p}p\right]\right\|+ \left\| \nabla X(\bar p)^{-1}P_{p\bar{p}}X(p)\right\|.
\end{equation}
Since $\nabla X(\bar{p})$ is nonsingular, using \eqref{eq:qwhj1} and  \eqref{diferenciabilidade}, and taking into account that $ {\lim_{p\to \bar{p}}}r(p)=0$, we can take  $\bar{\delta}>0$ with $0<\bar{\delta}<\delta_{\bar{p}}$ such that
\begin{eqnarray}\nonumber
\left\|\nabla X(\bar{p})^{-1}\left[P_{p\bar{p}}X\left(p\right)- X({\bar p})- \nabla X(\bar p)R^{-1}_{\bar{p}}p\right]\right\|&\leq& \dfrac{a_{0}}{2a_{1}}\left\|R^{-1}_{\bar{p}}p\right\|\\\nonumber
&\leq &\dfrac{1}{2a_{1}}d\left(p, \bar{p}\right) , \qquad  \forall ~p\in B_{\bar{\delta}}(\bar{p}).
\end{eqnarray}
Thus, using \eqref{eq:AAABBB} and  \eqref{eq:qwhj1} we conclude that  $d(p, \bar{p})\leq   d(p, \bar{p})/2  + a_{1}\|\nabla X(\bar{p})^{-1}P_{p\bar{p}}X\left(p\right)\|$,  for all $p\in B_{\bar{\delta}}(\bar{p})$, which is equivalent to 
$$
\frac{d\left(\bar{p},p\right)}{|X\left(p\right)\|}\leq  2a_{1} \|\nabla X(\bar{p})^{-1}\|, \qquad  \forall ~p\in B_{\bar{\delta}}(\bar{p}).
$$
Letting $ \tilde{\delta}=\min\{ \hat{\delta}, \bar {\delta} \}$ we conclude from \eqref{sssss}  and last inequality that,   all $p\in B_{\tilde{\delta}}(\bar{p})\backslash\{\bar{p}\}$,  holds
$$
\dfrac{\varphi\left(N_{R,X}(p)\right)}{\parallel X(p)\parallel^{2}} \leq  \frac{2a_1^2}{a^2_0}\|\nabla X(\bar{p})^{-1}\|^{2}\left( \| \nabla X(\bar{p})\| + \| r(R_{p}v_p)\|\right)^2 \dfrac{ d^2(R_{p}v, \bar{p})}{d^{2}(p, \bar{p})}.  
$$
Therefore, using Lemma~\ref{L5} and considering $\lim_{p\to \bar{p}}r(R_{p}v_p)=0$, the equality  \eqref{eq:SupelMerit}  follows by taking limit, as $p$ goes to $\bar{p}$,  in the latter  inequality.  For proving \eqref{eq:ArmijoCond}, we first use \eqref{eq:SupelMerit} for concluding that  there exists a $\delta>0$  such that,  $\delta< \hat{\delta}$ and for   $\sigma \in (0,1/2)$ we have
$$
\varphi\left(N_{R,X}(p)\right) \leq  \dfrac{1-2\sigma}{2}\parallel X(p)\parallel^{2},\quad \forall~ p\in B_{\delta}(\bar{p}).
$$
Since    $\grad\,\varphi(p)= \nabla X(p)^{*}X(p)$ we obtain $\left\langle\grad\,\varphi(p),  -\nabla X(p)^{-1}X(p)\right\rangle= -\| X(p)\|^{2}$, then the  last inequality  is equivalent to \eqref{eq:ArmijoCond}  and the proof is concluded.
\end{proof}
%%%%%%%%%%%%%%%%%%%%%%%
\subsection{Convergence Analysis}
%%%%%%%%%%%%%%%%%%%%%%%
In this section we  establish our  main results, namely, the global convergence and superliner rate  of the sequence generated by Algorithm~\ref{AL:dampedRetraction2}.   Before presenting  these results,  we remark that  the  well-definition of  the sequence generated by Algorithm~\ref{AL:dampedRetraction2}  follows from Lemma~\ref{L:DescDir},  see  \cite[Lemma 6]{Bortoloti2020}.  It is worth noting  that,   if the sequence generated by Algorithm~\ref{AL:dampedRetraction2} is finite, then the last point generated is a solution of \eqref{eq:TheProblem0} or  it is critical point of  $\varphi$ defined in \eqref{eq:naturalfuncaomerito}. Thus, from now on,  we  assume that  $\{p_k\}$  is infinite. In this case, we have     $v_k\neq 0$ and $X(p_k)\neq 0$,  for all $k=0, 1, \ldots$.
\begin{theorem}\label{theoremAuxiliary123}
Let $\mathbb{M}$ be a Riemannian manifold,  $\Omega\subset\mathbb{M}$ be an open set and  $X:\Omega\to T\mathbb{M}$ be a continuously differentiable vector field.  Take  $R$ a retraction in $\mathbb{M}$. If $\bar p\in \Omega$ is an accumulation point of a sequence $\left\{p_{k}\right\}$ generated by Algorithm~\ref{AL:dampedRetraction2} then $\bar p$ is a critical point of $\varphi$. Moreover, assuming that $\nabla X$ is nonsingular at $\bar p$, the convergence of $\left\{p_{k}\right\}$ to $\bar p$ is superlinear and $X(\bar p)=0$.
\end{theorem}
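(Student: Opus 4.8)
The plan is to prove the two assertions separately: first that any accumulation point $\bar p$ is a critical point of $\varphi$ (with no regularity assumption on $\nabla X$), and then that nonsingularity of $\nabla X(\bar p)$ upgrades this to $X(\bar p)=0$ together with superlinear convergence of the whole sequence. The common backbone is the monotonicity produced by the line search \eqref{eq:armijo}: since Lemma~\ref{L:DescDir} gives $\langle \grad\,\varphi(p_k), v_k\rangle<0$ whenever $v_k\neq 0$, the acceptance rule forces $\{\varphi(p_k)\}$ to be strictly decreasing and bounded below by $0$, hence convergent. Telescoping the Armijo inequalities yields $\sum_k \sigma\alpha_k|\langle \grad\,\varphi(p_k), v_k\rangle|\le \varphi(p_0)-\lim_k\varphi(p_k)<\infty$, so in particular $\alpha_k\langle \grad\,\varphi(p_k), v_k\rangle\to 0$; and by continuity $\lim_k\varphi(p_k)=\varphi(\bar p)$.

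For the first assertion I would argue by contradiction, assuming $\grad\,\varphi(\bar p)\neq 0$, which entails $X(\bar p)\neq 0$ since $\grad\,\varphi=\nabla X^{*}X$. Fix a subsequence $p_k\to\bar p$ and refine it so that the selected direction is always of a single type: either steepest descent $v_k=-\grad\,\varphi(p_k)$ or a Newton direction obeying \eqref{eq:NewtonDirectionCondction}. In both cases the normalized directions $w_k:=v_k/\|v_k\|$ satisfy the uniform bound $\langle\grad\,\varphi(p_k), w_k\rangle\le -\theta\|\grad\,\varphi(p_k)\|$, steepest descent corresponding to the extremal value. If $\liminf\alpha_k>0$, then $\langle\grad\,\varphi(p_k), v_k\rangle\to 0$, which forces $\|X(p_k)\|\to 0$ in the Newton case (where $\langle\grad\,\varphi(p_k), v_k\rangle=-\|X(p_k)\|^2$) or $\|\grad\,\varphi(p_k)\|\to 0$ in the gradient case, either way contradicting $\grad\,\varphi(\bar p)\neq 0$. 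Otherwise $\alpha_k\to 0$, so the Armijo test failed one step earlier; writing this failure through the scalar function $t\mapsto\varphi(R_{p_k}(t w_k))$, applying the mean value theorem, and dividing by $\|v_k\|$ gives $\langle\grad\,\varphi(R_{p_k}(\tau_k w_k)),\, R'_{p_k}(\tau_k w_k)w_k\rangle>\sigma\langle\grad\,\varphi(p_k), w_k\rangle$ for some $\tau_k\in(0,2\alpha_k\|v_k\|)$. Passing to the limit, using $R'_{\bar p}(0)=I$ from Definition~\ref{Df:nqc} and continuity of $\grad\,\varphi$, yields $\langle\grad\,\varphi(\bar p),\bar w\rangle\ge\sigma\langle\grad\,\varphi(\bar p),\bar w\rangle$, hence $\langle\grad\,\varphi(\bar p),\bar w\rangle\ge 0$; this contradicts the limiting angle bound $\langle\grad\,\varphi(\bar p),\bar w\rangle\le -\theta\|\grad\,\varphi(\bar p)\|<0$.

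The delicate point, and the main obstacle, is the limit $\tau_k\to 0$ required to annihilate the retraction derivative: since $\tau_k<2\alpha_k\|v_k\|$, I must keep $\|v_k\|$ bounded along the subsequence. This is exactly where condition \eqref{eq:NewtonDirectionCondction} does its work that the plain scheme of Algorithm~\ref{AL:jogo_with_retraction} cannot: in the Newton case it bounds $\|v_k\|=\|X(p_k)\|^2/|\langle\grad\,\varphi(p_k), w_k\rangle|\le \|X(p_k)\|^2/(\theta\|\grad\,\varphi(p_k)\|)$, so $\alpha_k\to 0$ indeed gives $2\alpha_k\|v_k\|\to 0$, while simultaneously supplying the strict limiting negativity; the gradient case is automatic since then $\|v_k\|=\|\grad\,\varphi(p_k)\|$. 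The only other technical care is that $w_k\in T_{p_k}\mathbb{M}$, so the convergence $w_k\to\bar w$ must be made precise via the parallel transport $P_{p_k\bar p}$, exactly as in the estimates of Section~\ref{sec:supelinear}.

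For the second assertion, suppose in addition that $\nabla X(\bar p)$ is nonsingular. The first part gives $\nabla X(\bar p)^{*}X(\bar p)=\grad\,\varphi(\bar p)=0$, and nonsingularity of $\nabla X(\bar p)^{*}$ forces $X(\bar p)=0$, so $\bar p$ is a singularity of $X$. Now the local machinery of Section~\ref{sec:supelinear} activates, provided $\theta<1/\mbox{cond}(\nabla X(\bar p))$ as required by Lemma~\ref{Le:AdTesteNew}: that lemma furnishes a ball $B_{\hat\delta}(\bar p)$ on which the Newton direction exists, is unique, and satisfies \eqref{eq:NewtonDirectionCondction}, while Lemma~\ref{Le:SupLinConPhi} shrinks the ball so that the full step $\alpha_k=1$ is accepted by \eqref{eq:armijo}. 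Hence, once an iterate enters the smaller ball, Algorithm~\ref{AL:dampedRetraction2} selects the Newton direction with unit stepsize, i.e. $p_{k+1}=N_{R,X}(p_k)$, and Lemma~\ref{L:InvNx} makes the ball invariant, so every subsequent iterate stays inside it. Since $\bar p$ is an accumulation point, some iterate does enter, the tail of $\{p_k\}$ coincides with pure Newton iterates, and the estimate $d(N_{R,X}(p),\bar p)/d(p,\bar p)\to 0$ of Lemma~\ref{L5}, equivalently Theorem~\ref{Theorem of Newton}, gives convergence of the entire sequence to $\bar p$ at a superlinear rate.
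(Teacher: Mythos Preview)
Your argument is correct and uses the same ingredients as the paper (Armijo monotonicity, the angle condition \eqref{eq:NewtonDirectionCondction}, and the local lemmas of Section~\ref{sec:supelinear}), but the first part is organized differently. The paper argues directly and, in the case $\liminf\alpha_k=0$, performs a further split into $\{v_k\}$ bounded versus unbounded: in the unbounded branch it invokes \eqref{eq:NewtonDirectionCondction} to obtain $\theta\|\grad\varphi(p_{k_j})\|\le \|X(p_{k_j})\|^2/\|v_{k_j}\|\to 0$, while in the bounded branch it passes to the limit in the failed Armijo test at each fixed level $2^{-s}$ and afterwards lets $s\to\infty$. You instead argue by contradiction, and the standing hypothesis $\grad\varphi(\bar p)\neq 0$ combined with \eqref{eq:NewtonDirectionCondction} bounds $\|v_k\|\le \|X(p_k)\|^2/(\theta\|\grad\varphi(p_k)\|)$ outright, so the unbounded sub-case never arises and a single mean-value-theorem step replaces the paper's double limit. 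Your route is slightly shorter; the paper's makes the role of each sub-case more visible. Both arguments tacitly require $\theta>0$ in this part. For the superlinear assertion you are in fact a little more careful than the paper: you explicitly appeal to Lemma~\ref{Le:AdTesteNew} (and its proviso $\theta<1/\mbox{cond}(\nabla X(\bar p))$) to guarantee that near $\bar p$ the algorithm actually selects the Newton direction satisfying \eqref{eq:NewtonDirectionCondction}, a point the paper's proof needs for Algorithm~\ref{AL:dampedRetraction2} to reduce to pure Newton iterations but does not spell out.
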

\begin{proof}
Assume that   $\left\{p_{k}\right\}$ generated by  Algorithm~\ref{AL:dampedRetraction2} has  an accumulation point $\bar p$. First, we show that $\bar p$  is a critical point of $\varphi$.  We can assume $\grad\,\varphi(p_{k})\neq0$ for all $k=0,\,1,\,\ldots$ Hence, from \eqref{eq:armijo}
$$
\varphi(p_{k})-\varphi(p_{k+1})\geq-\sigma\alpha_{k}\langle\grad\,\varphi(p_{k}), v_{k}\rangle
 = \begin{cases}
\sigma\alpha_{k} \| X(p_k)\|^{2}>0, & \text{if} ~v_{k} ~\text{satisfies}~ \eqref{eq:Newton}\, and ~\eqref{eq:NewtonDirectionCondction};  \\
\sigma\alpha_{k} \|\grad\,\varphi(p_{k})\|^{2}>0, & \text{else}.
\end{cases}
$$
Thus, $\left\{\varphi(p_{k})\right\}$ is   strictly decreasing. Since  $\varphi$  is bounded  from below by zero,   it converges. Therefore,
$$
 0=\lim_{k\to \infty}  \alpha_{k}\left\langle \grad\,\varphi(p_{k}), v_{k}\right\rangle=\begin{cases}
\sigma  \lim_{k\to \infty}   \alpha_{k} \| X(p_k)\|^{2}= 0, & \text{if} ~v_{k} ~\text{satisfies}~ \eqref{eq:Newton}\, and ~\eqref{eq:NewtonDirectionCondction};  \\
\sigma  \lim_{k\to \infty}   \alpha_{k} \|\grad\,\varphi(p_{k})\|^{2}=0, & \text{else}.
\end{cases}
$$
In this  equality we have two possibilities, namely,  ${\lim\inf}_{k\to \infty} \alpha_{k}>0$ and  ${\lim\inf}_{k\to \infty} \alpha_{k}=0$.  First,  we assume that ${\lim\inf}_{k\to \infty} \alpha_{k}>0$.   Let  $\{p_{k_j}\}$ be a subsequence of $\{p_{k}\}$ such that  $\lim_{j\to \infty}  p_{k_j}=\bar p$ and ${\lim}_{j\to \infty} \alpha_{k_j}={\bar \alpha}>0$.  Taking into account $\lim_{j\to +\infty}p_{k_j}=\bar p$ and $X$ is continuous at $\bar p$  we conclude $X(\bar p)=0$ or $\grad\,\varphi(\bar p)=0$. Since $\grad\,\varphi(\bar p)=\nabla X(\bar p)^{*}X(\bar p)$,  $\bar p$ is a critical point of $\varphi$. Now, we assume that  ${\lim\inf}_{j\to \infty} \alpha_{k_{j}}=0$. We analyze the following  two possibilities: the sequence $\{v_{k}\}$ is unbounded or bounded.  Firs we assume $v_{k}$ is unbounded.  Since  $\lim_{j\to +\infty}p_{k_j}=\bar p$ and $X$ is continuous $\bar p$,  the direction  $v_{k}$  satisfies  \eqref{eq:DirectionOfGradiente} just for finite indexes, otherwise ${\lim}_{j\to \infty} v_{k_j}=-{\lim}_{j\to \infty} \nabla X(p_{k_j})^{*}X(p_{k_j})= -\nabla X({\bar p})^{*}X({\bar p})$, which contradicts the   assumption that  $v_{k}$ is unbounded.  Hence, we can assume  without loss of generality that $v_{k}$   satisfies  \eqref{eq:Newton} and \eqref{eq:NewtonDirectionCondction}, for all $k=0, 1, \dots$ Thus, for all $j$, it follows from \eqref{eq:NewtonDirectionCondction} that 
$$
0\leq \theta\|\grad\,\varphi(p_{k_j})\|\leq\dfrac{\|X({p_{k_j})}\|^{2}}{\|v_{k_j}\|}
.$$
Taking the limit as $j$ goes to infinity  in the last inequality, considering $v_{k}$ is unbounded and $\lim_{j\to \infty} \| X(p_k)\|^{2}=\|X({\bar p})\|$ we conclude that  $\lim_{j\to \infty} \| \grad\,\varphi(p_{k_j})\|=0$. Hence, $ \grad\,\varphi(\bar p)=0$. Now, we assume that  $\{v_{k}\}$ is bounded. In this case, we can assume that $\{v_{k}\}$ converges to some $\hat{v}$,  taking a subsequence if necessary.  Hence, we have $X({\bar p})+\nabla X({\bar p}){\hat v}=0$ or  ${\hat v}=-\nabla X({\bar p})^{*}X({\bar p})$, which implies that 
\begin{equation} \label{eq:retad}
\left\langle \grad\,\varphi(\bar p), ~ {\hat v} \right\rangle=\begin{cases}
-\| X({\bar p})\|^{2}, & \text{if} ~X({\bar p})+\nabla X({\bar p}){\hat v}=0;  \\
 -\|\grad\,\varphi({\bar p})\|^{2}, &  \text{if} ~ {\hat v}=-\nabla X({\bar p})^{*}X({\bar p}).
\end{cases}
\end{equation}
Since ${\lim\inf}_{j\to \infty} \alpha_{k_{j}}=0$, given $s\in\mathbb{N}$ we can take $j$ large enough such that $\alpha_{k_{j}}<2^{-s}$. Thus,    $2^{-s}$ does not satisfies  the Armijo's condition \eqref{eq:armijo}, i.e., 
$$
\varphi\left(R_{p_{k_{j}}}(2^{-s} v_{k})\right)> \varphi(p_{k_{j}})+\sigma 2^{-s}\left\langle \grad\,\varphi(p_{k_{j}}), ~ v_{{k_{j}}} \right\rangle.
$$
Since  $\lim_{j\to +\infty}v_{k_j}={\hat v}$,   $\lim_{j\to +\infty}p_{k_j}=\bar p$ and  the retraction  mapping is continuous, taking limit in the last inequality  we have   $\varphi\left(R_{\bar p}(2^{-s} {\hat v})\right) \geq \varphi(\bar p)+\sigma 2^{-s}\left\langle \grad\,\varphi(\bar p), ~ {\hat v} \right\rangle$,
which implies 
$ 
[\varphi\left(R_{\bar p}(2^{-s} {\hat v})\right)- \varphi(\bar p)]/2^{-s}\geq \sigma \left\langle \grad\,\varphi(\bar p), ~ {\hat v} \right\rangle.
$
 Then, letting $s$  goes to infinity  we conclude that $\left\langle \grad\,\varphi(\bar p), ~ {\hat v} \right\rangle  \geq \sigma \left\langle \grad\,\varphi(\bar p), ~ {\hat v} \right\rangle$, or equivalently $0\geq(\sigma-1) \left\langle \grad\,\varphi(\bar p), ~ {\hat v} \right\rangle$. Since $0<\sigma<1/2$ we have $\left\langle \grad\,\varphi(\bar p), ~ {\hat v} \right\rangle\geq0$. Taking into account $\left\langle \grad\,\varphi(p_{k_{j}}), ~ v_{{k_{j}}} \right\rangle\leq0$ for any $j$, we conclude $\left\langle \grad\,\varphi(\bar p), ~ {\hat v} \right\rangle=0$. Therefore, considering that $\grad\,\varphi(\bar p)=\nabla X(\bar p)^{*}X(\bar p)$,   it follows from \eqref{eq:retad} that $\grad\,\varphi(\bar p)=0$.
From now on, we assume that $\nabla X(\bar p)$ is nonsingular.  Since  $0=\grad\,\varphi(\bar p)=\nabla X(\bar p)^{*}X(\bar p)$,  we conclude that   $X(\bar{p})=0$. We proceed to prove  superlinearly convergence of  $\{p^k\}$  to ${\bar p}$. For that, we first will prove  there exists  $k_{0}$ such that $\alpha_{k}\equiv1$,  for all $k\geq k_{0}$.  Using    Lemma~\ref{le:NonSing} and Lemma~\ref{L:InvNx} we conclude   that there exists   $\hat{\delta}>0$ such that $\nabla X(p)$ is nonsingular  and $N_{R,X}(p)\in B_{{\delta}}(\bar p)$  for all $p\in B_{{\delta}}(\bar p)$ and all  $\delta\leq \hat{\delta}$.   Thus,  due to $\bar p$ be    an accumulation point of    $\left\{p_{k}\right\}$,  there exits $k_{0}$ such that $p_{k_{0}} \in B_{\hat{\delta}}(\bar p)$ and   shrinking  $\hat{\delta}$, if necessary,  from   Lemma~\ref{Le:SupLinConPhi} we have 
$$
\varphi\left(N_{R,X}(p_{k_{0}})\right) \leq \varphi\left(p_{k_{0}})\right)+\sigma\left\langle\grad\,\varphi(p_{k_{0}}), v_{k_{0}}\right\rangle, 
$$
where  $v_{k_{0}}=-\nabla X(p_{k_{0}})^{-1}X(p_{k_{0}})$. Hence,   \eqref{Newton's iterate mapping} and  \eqref{eq:armijo} imply   $\alpha_{k_{0}}=1$ and then   using   \eqref{eq:NM1233}  we conclude $p_{k_{0}+1}= N_{R,X}(p_{k_{0}})$. Due to  $N_{R,X}(p_{k_{0}}) \in B_{\hat{\delta}}(\bar p)$  we also have  $ p_{k_{0}+1}\in B_{\hat{\delta}}(\bar p)$.  Then,  an  induction step is completely analogous,   yielding 
\begin{equation}\nonumber\label{indicesdentrodabaciarapida123}
\alpha_{k}\equiv 1, \qquad    p_{k+1}= N_{R,X}(p_{k}) \in B_{\hat{\delta}}(\bar p), \qquad \forall ~k\geq k_{0}.
\end{equation}
 Finally,  to obtain the superlinear  convergence of $\left\{p_{k}\right\}$ to ${\bar p}$, let $\bar\delta >0$ be  given by  Theorem~\ref{Theorem of Newton}. Thus,  making $\hat{\delta}$ smaller if necessary so that $\hat{\delta}<\bar\delta$,  we can apply Theorem~\ref{Theorem of Newton} to conclude  that  $\left\{p_{k}\right\}$ converges  superlinearly to  $\bar p$.
\end{proof}
 %%%%%%%%%%%%%%%%%%%%%%%%%
\section{Numerical Experiments}\label{Sec:Numerical_Experiment} 
%%%%%%%%%%%%%%%%%%%%%%%%%
In this section, some examples are presented in order to examine the numerical behavior of algorithms studied in the previous sections. The  examples are established  on  spheres,   product of two Stiefel manifolds and cones of symmetric positive definite matrices. All numerical experiments have been developed by using MATLAB R2015b and were performed on an Intel Core Duo Processor 2.26 GHz, 4 GB of RAM, and OSX operating system. We have been considered convergence at the iterate  $k$ when $p_k\in \mathbb{M}$ satisfies $\|\mbox{grad}\,f(p_{k})\|<10^{-6}$, where $\|\cdot\|$ is the norm associated to metric of the considered manifold. The algorithms were interrupted when the step length reached a value less than $10^{-10}$ or the maximum number of $2000$ iterates  was reached. Moreover, we have been assumed $\sigma=10^{-3}$. All codes are freely available at \url{http://www2.uesb.br/professor/mbortoloti/wp-content/uploads/2020/08/public_codes.zip}

%%%%%%%%%%%%%%%%%%%%%%%%%
\subsection{Problems  on the sphere}
The aim of this section is to present problems on the sphere  $\mathbb{M}:=\left({\mathbb S}^{n}, \langle \, , \, \rangle\right)$, where ${\mathbb S}^{n}:=\left\{p:=(p_{1}, ..., p_{n+1})\in\mathbb{R}^{n+1}:\parallel p\parallel=1\right\}$ is endowed with the Euclidian inner product $\langle \, ~,~ \, \rangle$ and its corresponding norm $\| \cdot \|$. The tangent hyperplane in $M$  at $p$ is given by $T_{p}\mathbb{M}:=\left\{v\in\mathbb{R}^{n+1}:\langle p, v\rangle=0\right\}$.  It is worth noting that  problems have already studied in \cite{Bortoloti2020}. They were solved by using Algorithm~\ref{AL:jogo_with_retraction} where  the exponential
\begin{equation}\label{Eq:exp_Esfera}
\exp_{p}v= \cos(\parallel v\parallel)p+\sin(\parallel v\parallel)\dfrac{v}{\parallel v\parallel}, \qquad p\in{\mathbb S}^{n},\, v\in  T_{p}\mathbb{M}/\{0\}, 
\end{equation}
were used to update the iterates.  Here  we compare  the performance of Algorithm~\ref{AL:jogo_with_retraction}    by using  the exponential and  the following retraction
\begin{equation}\label{Eq:retraction_Esfera}
R_{p}v=\dfrac{p+v}{\parallel p+v\parallel}, \qquad p\in{\mathbb S}^{n},\, v\in  T_{p}\mathbb{M}/\{0\}.
\end{equation}
It is worth to point out that, for all problems studied in this section,  all results obtained show that  Algorithm~\ref{AL:jogo_with_retraction}  with the retraction \eqref{Eq:retraction_Esfera} has a better performance than the exponential \eqref{Eq:exp_Esfera}. We also note that retraction \eqref{Eq:retraction_Esfera} has  performed better than \eqref{Eq:exp_Esfera} in others algorithms, see for example \cite{Smith1994}. 
%%%%%%%%%%%%%%%%%%%%%%%%%%%%%%%%
\subsubsection{ Non conservative vector field}
In this section, we consider the problem of finding a singularity of  a non conservative vector field on the the sphere.  For that  fix a point  ${\bar p}\in \mathbb{M}$ and  $Q$ a $n\times n$ skew-symmetric matrix, and define  the vector field   $X$ by 
\begin{equation}\nonumber%\label{Eq:vector_Non_Conservative}
X(p)=Q(p-{\bar p})-\left\langle p, Q(p-{\bar p})  \right\rangle p.  
\end{equation}
 Note  that ${\bar p}$ is a singularity of $X$, i.e., $X( {\bar p})=0$.  We recall the  covariant derivative of $X$ is given by
\begin{equation}\nonumber%\label{eq:covar_nonconsevative}
\nabla X(p)=\left[I+pp^T\right]Q-p\left(Q(p-{\bar p})\right)^T -\left\langle p, Q(p-{\bar p}) \right\rangle I.
\end{equation}
Since $Q$ is not symmetric neither is $\nabla X(p)$, consequently, $X$ is not a conservative vector field.
\begin{table}[h]
	\centering
	\begin{tabular}{llllll}
		\toprule
		\multirow{2}{*}{n} &
		\multicolumn{2}{c}{Iter} &
		\multicolumn{2}{c}{E$X$} \\
		& {$\exp_{p}v$} & {$R_{p}v$}  & {$\exp_{p}v$} & {$R_{p}v$} \\
		\midrule
		2    & 7  & 7 & 14 & 14 \\
		50   & 22 & 21 &75 & 64 \\
		500  & 27 & 26 &  71 & 61 \\
		1000 & 15 & 12 & 31 & 25 \\
		\bottomrule
	\end{tabular}
\caption{Comparison between retractions \eqref{Eq:exp_Esfera} and \eqref{Eq:retraction_Esfera} for Algorithm \ref{AL:jogo_with_retraction}.}
\label{tab:sphere}
\end{table}
We present a comparative study between retractions \eqref{Eq:exp_Esfera} and \eqref{Eq:retraction_Esfera} for Algorithm  \ref{AL:jogo_with_retraction}. We have performed numerical experiments for dimensions $n=2,50,500,1000$. For each dimension, we consider a skew matrix, $Q$, defined as $Q = A - A^T$, where $A$ is a random matrix generated by code \verb|A = randn(n,n)|. We have taken one initial guess on sphere for dimension, in order to start algorithm. Our numerical results are presented in Table \ref{tab:sphere}, where Iter and EX denote the iteration numbers and evalutiuon of vector field, respectively. It can be seen that the exponential mapping presents a greater number of vector fields evaluations than retraction. Moreover, the retraction \eqref{Eq:retraction_Esfera} solves the problems with a smaller number of iterations than exponential map \eqref{Eq:exp_Esfera}.  We also remark that the quantities   Iter and EX do not depend on the dimension of the problem. 
%%%%%%%%%%%%%%%%%%%%%%%%%%%%%%
\subsubsection{Rayleigh quotient} \label{sec:rayleigh}
Numerical results to minimizer the Rayleigh quotient  on the sphere is presented in this section.  Let $A$  be  an $n\times n$ symmetric and  positive definite matrix and $f:{\mathbb S}^{n}\to\mathbb{R}$  be the Rayleigh quotient, 
\begin{equation}\nonumber%\label{Eq:RayQuotient}
f(p)=p^{T}Ap.
\end{equation}
 The  gradient and the hessian of $f$ are given by, respectively, by 
$$
\mbox{grad}\, f(p)= -2 \left[I+pp^T\right]Ap, \qquad \mbox{Hess}\,f(p)=\left[I+pp^T\right]\left[A- p^TApI\right].
$$

In order to develop our numerical experiments, we define the Rayleigh quotient  mapping by considering symmetric positive definite matrices $A$, given by the following codes
\begin{itemize}
	\item[1.] \verb|A = gallery('poisson',ceil(sqrt(n))+1)|,
	\item[2.] \verb|A = ones(n,n); A = A*diag(diag(A))+diag((2*n)*ones(n,1))|,
	\item[3.] \verb|e = ones(n,1); A = spdiags([e 10*e e], -1:1, n, n)|,
	\item[4.] \verb|v = rand(n,1); A = diag(v); A(1,n) = 1; A(n,1) = 1|,
	\item[5.] \verb|A = sprandsym(n,0.7,0.1,1)|.
\end{itemize}
For each type of matrix above, simulations were performed for dimensions $n=500$, $750$, $1000$, $1250$, $1500$, where we considered, for each one, 10 initial guesses randomly taken on sphere, totalizing 250 problems.

\begin{figure}[h!]
	\centering
	\includegraphics[scale=0.65]{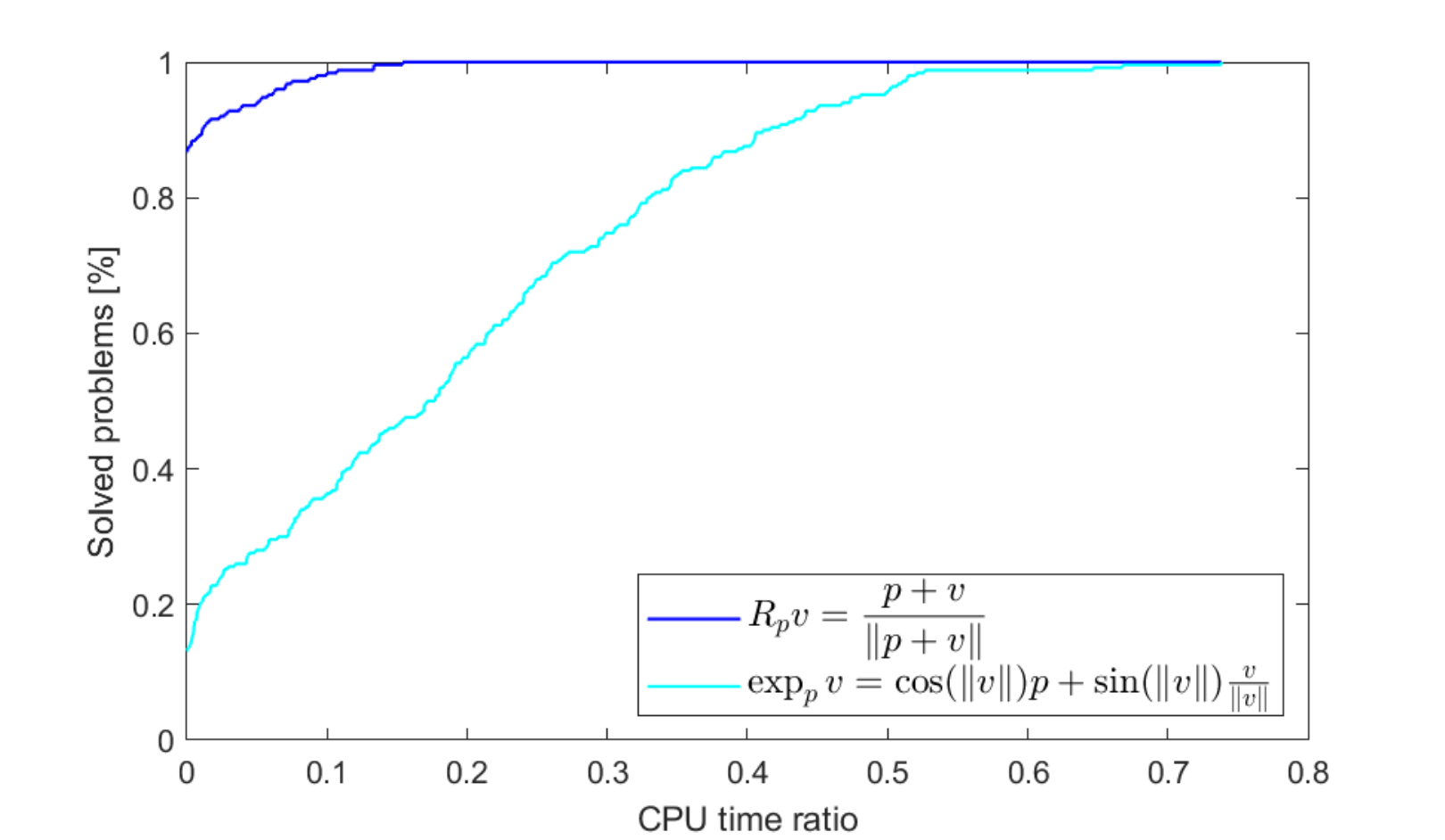}
	\caption{Performance profile for Algorithm \ref{AL:jogo_with_retraction} with retractions \eqref{Eq:exp_Esfera} and \eqref{Eq:retraction_Esfera}.}
	\label{grafico_Rayleigh}
\end{figure}

In figure \ref{grafico_Rayleigh} can be seen that the retraction \eqref{Eq:retraction_Esfera} solves all problems faster than exponential \eqref{Eq:exp_Esfera}. We can also observe that Algorithm \ref{AL:jogo_with_retraction} with \eqref{Eq:retraction_Esfera} solves all problems before 0.2 CPU time ratio. This behaviour can be assigned to the simple form of retraction \eqref{Eq:retraction_Esfera} so that fewer calculations are needed to generate the next point in the sequence, when considered the  classical exponential.

\subsection{The truncated singular value problem on Stiefel manifold} \label{sec:robust}
In this section, we study the globality of Algorithm \ref{AL:dampedRetraction2} to solve a truncated singular value problem. This problem, analyzed in \cite{Sato2013}, consists of 
\begin{align}\label{eq:TrunSingularProblem}
\begin{split}
\min F(P, Q)&=\mbox{Tr}(-P^{T}AQN) 
\\
\text{subject to}\,\left(P, Q\right)&\in\,St(p,m)\times St(p,n)
\end{split}
\end{align}
where, $\mbox{Tr}\left(X\right)$ denotes the trace of $X$, $St(p,n)=\left\{ P\in\mathbb{R}^{n\times p}\, :\,P^{T}P=I_{p}\right\}$ is the Stiefel manifold endowed with the Frobenius metric, $A\in \mathbb{R}^{m\times n}$ with $m\geq n$ is a given matrix, and $N = \mbox{diag}\left(\mu_1, \mu_2,\ldots ,\mu_p\right)$ is a diagonal matrix with $\mu_1 > \mu_2 >\ldots >\mu_p > 0$ for $p\leq n$. Considering $P\in St(p, m)$, the tangent space of $St(p, m)$, at $P$, is given by $T_{P}St(p,m)=\left\{PB+P_{\perp}C\,:\,B\in \mbox{Skew}(p),\,C\in \mathbb{R}^{(m-p)\times p}\right\}$
where $\mbox{Skew}(p)$ denotes the set of all $p\times p$ skew-symmetric matrices and $P_{\perp}$ is an $n\times(n-p)$ orthonormal matrix such that $PP^{T}+ P_{\perp}P_{\perp}^{T}=I_{m}$. In order to implement the Algorithm \ref{AL:jogo_with_retraction} to solve problem \eqref{eq:TrunSingularProblem}, we present the Newton equation \eqref{eq:Newton1} and the safeguard direction \eqref{eq:DirectionOfGradiente}. The Newton equation at $(P, Q)\in St(p,m)\times St(p,n)$ in the variables $U,V\in T_{P}St(p,m)\times T_{Q}St(p,n) $ is writen as
\begin{equation}\label{eq:newton_truncado}
\left\{
\begin{array}{ccc}
VS_1-AUN-P \mbox{ sym}\left(P^{T}\left(VS_1-AUN\right)\right)&=&AQN-PS_1, \\
US_2-A^TVN-Q\mbox{ sym}\left(Q^T\left(US_2-A^TVN\right)\right)&=&A^TPN-QS_2, 
\end{array}
\right.
\end{equation}
see \cite{Aihara2017}.  The safeguard direction \eqref{eq:DirectionOfGradiente} is given by
\begin{multline*}
\grad\,\varphi\left(P, Q\right)\left(V, U\right)=\big( VS_1-AUN-P\mbox{sym}\left(P^T\left(VS_1-AUN\right)\right),\\
US_2-A^TVN-Q\mbox{sym}\left(Q^T\left(US_2-A^TVN\right)\right)\big),
\end{multline*}
where $\mbox{sym}\left(X\right)=\left(X+X^T\right)/2$ denotes the symmetric part of a square matrix $X$, $S_1=\mbox{sym}\left(P^TAQN\right)$ and $S_2=\mbox{sym}\left(Q^TA^TPN\right)$. To compute the iterates on Stiefel manifold, we have considered the following retractions:
\begin{itemize}
\item Exponential map, \cite{Edelman1998}, given by
\begin{equation}\label{eq:st_exp}
R_P(V) = PM+QN,
\end{equation}
where $QR$ is the compact decomposition of $(I_n-PP^T)V$ such that $Q$ is $n$-by-$p$, $R$ is $p$-by-$p$ and $M, N$  are $p$-by-$p$ matrices given by the $2p$-by-$2p$ matrix exponential

	\[
	\left( \begin{array}{c}
	M \\
	N
	\end{array} \right)
	= \exp
	\left(
	\begin{array}{cc}
	P^TV  & -R^T \\
	R & 0
	\end{array}
	\right)
	\left(
	\begin{array}{c}
	I_p \\
	0
	\end{array}
	\right).
	\]
\item Cayley map, \cite{wen2013feasible}, writen as
\begin{equation} \label{eq:st_cayley}
R_P(V)=P+M\Big(I_{2p}-\frac{1}{2} N^TM\Big)^{-1}M^TP,
\end{equation}
where $M=[\Pi_P V ,P]$ and $N=[P,-\Pi_P V]$, with $\Pi_P = I_p-PP^T/2$.
\item Polar map, \cite{Absil2009}, given by
\begin{equation}\label{eq:st_polar}
R_P(V) = (P+V)\Big(I_p+V^TV\Big)^{-1/2}.
\end{equation}
\item \mbox{qf} retraction, \cite{Absil2009}, writen as
\begin{equation}\label{eq:st_qf}
R_P(V) = \mbox{qf}(P+V),
\end{equation}
where $\mbox{qf}(P)$ denotes the $Q$ factor of the  QR decomposition with nononegative elements on the diagonal of the upper triangle matrix.
\end{itemize}

In order to solve the equations in \eqref{eq:newton_truncado} we used the ideas presented in \cite{Aihara2017}. The numerical experiments were performed on the product of two Stiefel manifolds, $St(p,m)\times St(p,n)$, with $(m,n,p)=(5,3,2),(7,5,2),(10,5,3), (20,10,3)$. For each $(m,n,p)$, the generated problem has  $(P^*,Q^*) \in St(p,m)\times St(p,n)$ as  critical point and  $A=P^*N{Q^*}^T$, where $N=\mbox{diag}(p, p-1,\ldots,1)$ and $P^*$, $Q^*$ are given by the Q factors of the QR decompositions of two random matrices.  It were taken $10$ initial guesses $(P_{0},Q_{0})$, for each dimensions $(m,n,p)$, by considering the matrices $P_0 = \mbox{qf}(P^*+\mbox{randn}(m,p) \, \varepsilon)$ and $Q_0 = \mbox{qf}(Q^*+\mbox{randn}(n,p) \, \varepsilon)$ with 
$\varepsilon=10^{-4}, 10^{-3},\cdots,10^{3}$.

In figures \ref{fig:aiharaexp}-\ref{fig:aiharaqf} we present the percentege of solved problems when the initial guesses are taken by considering the values from $\epsilon=10^{-4}$ to $\epsilon=10^{3}$. As it can seen, the Algorithm \ref{AL:dampedRetraction2} is not sensitive to increasing of $\epsilon$ solving $100\%$ of the problems for each retraction given in \eqref{eq:st_exp}-\eqref{eq:st_qf}. 
\begin{figure}[h!]
	\centering
	\subfigure[Exponential map given in \eqref{eq:st_exp}.]{\includegraphics[scale=0.25]{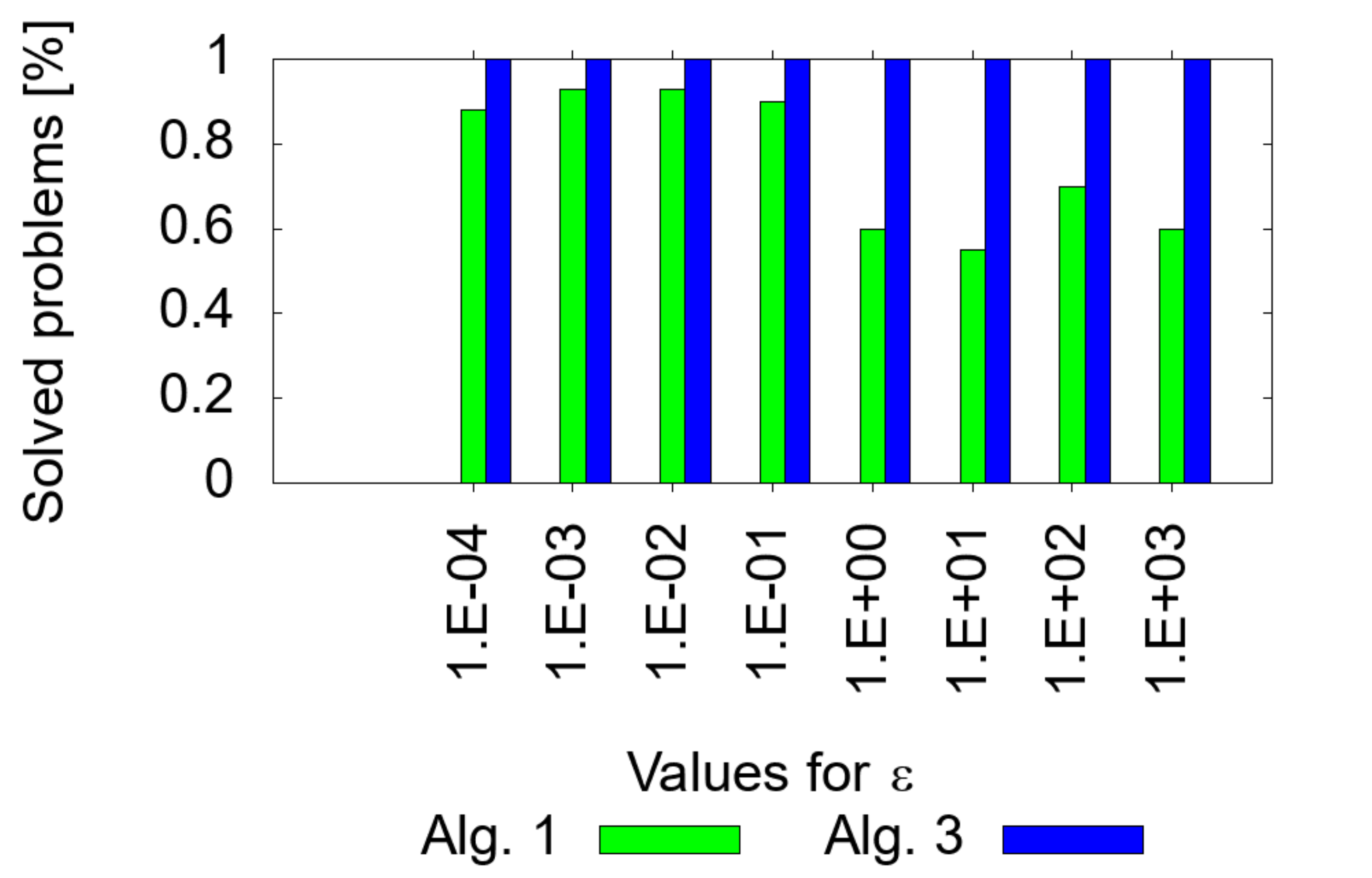}\label{fig:aiharaexp}}
	\subfigure[Cayley map given in  \eqref{eq:st_cayley}.]{\includegraphics[scale=0.25]{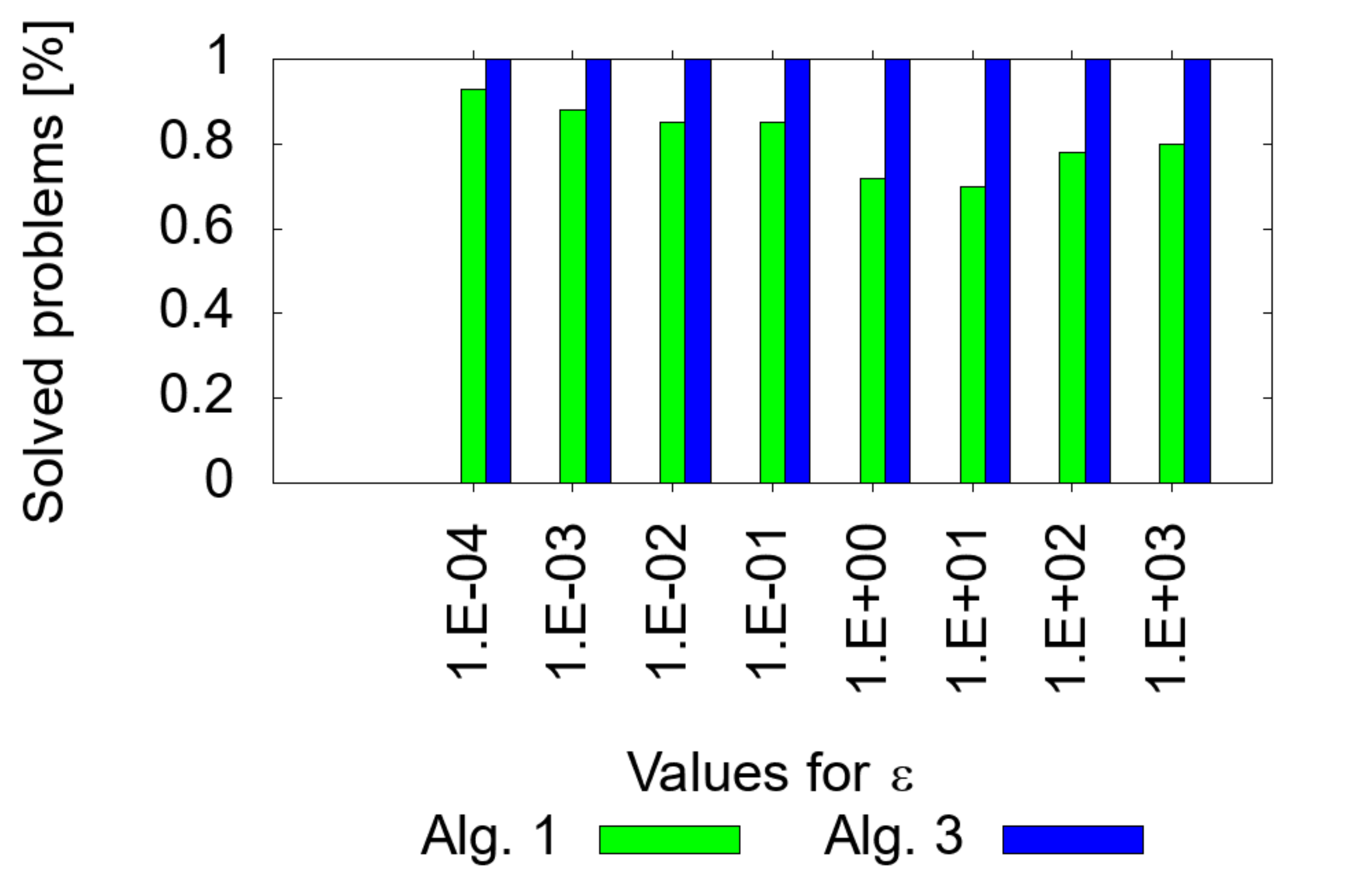}\label{fig:aiharacayley}}
	\\
	\subfigure[Polar map given in \eqref{eq:st_polar}.] {\includegraphics[scale=0.25]{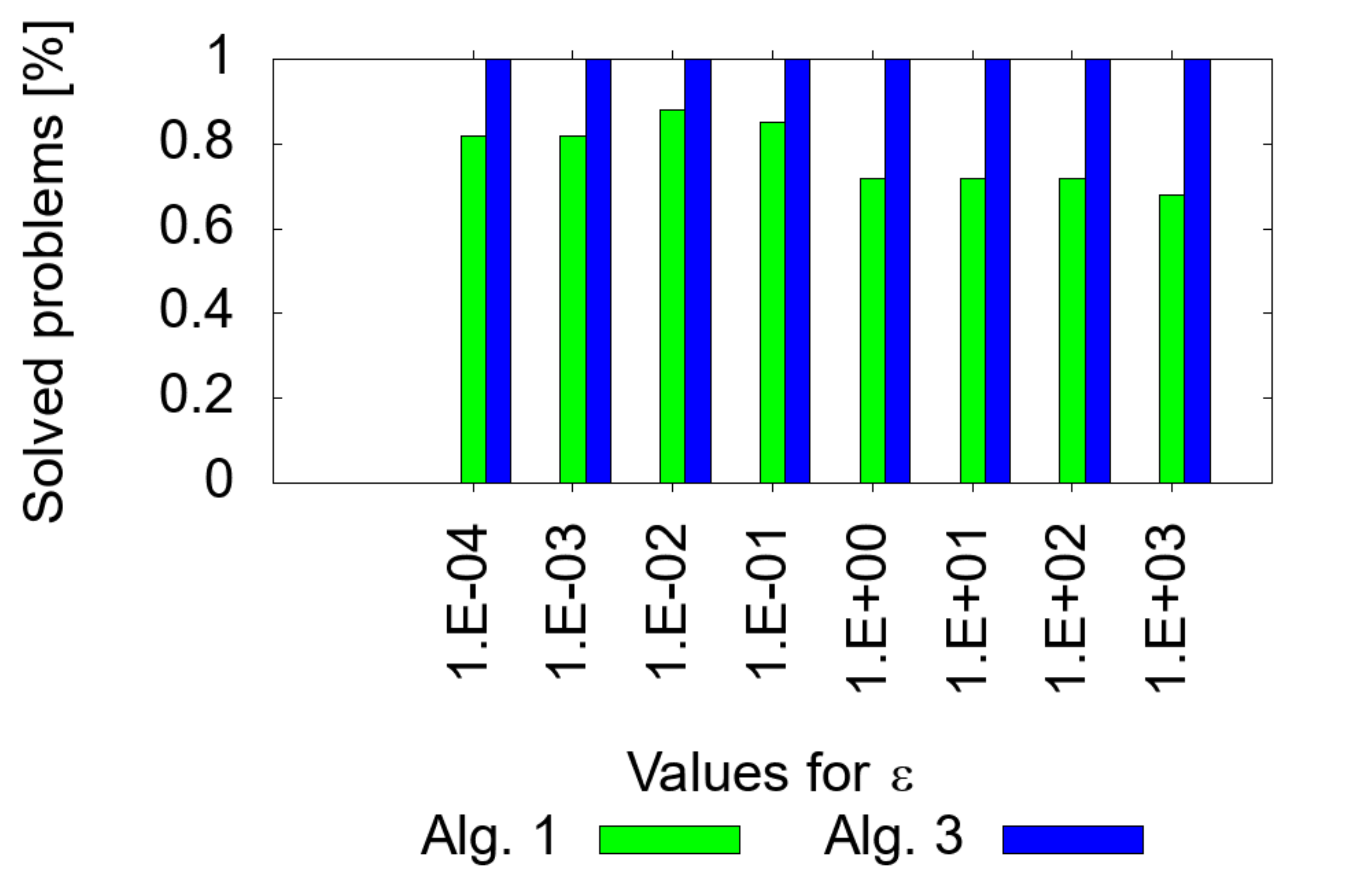} \label{fig:aiharapolar} }
	\subfigure[\mbox{qf} decomposition given in \eqref{eq:st_qf}.] {\includegraphics[scale=0.25]{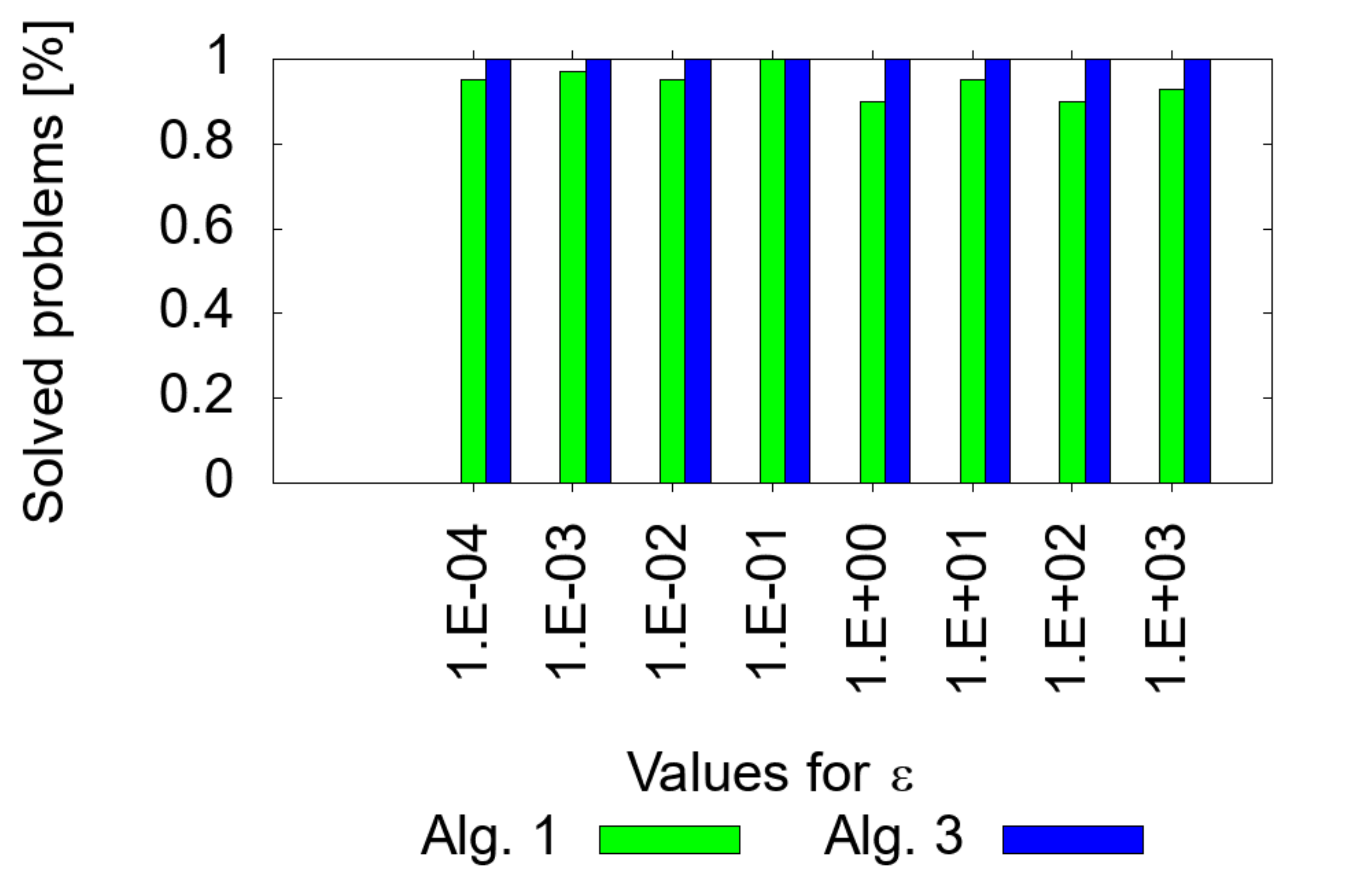} \label{fig:aiharaqf}}
	\caption{Globality analysis for Alg. \ref{AL:NewtonPuro} and Alg. \ref{AL:dampedRetraction2} considering retractions for some $\varepsilon$ values in case of problem \eqref{eq:TrunSingularProblem} and  $\theta=0.9$.}
\end{figure}

%It can be seen that as the $\varepsilon$ values are increased, Algorithm %\ref{AL:jogo_with_retraction} solves all problems.
%
%
%
%\begin{figure}[!htb]
%	\centering
%	\includegraphics[scale=0.8]{figures/dampedret_alldim.png}
%	\caption{Performance profile considering  DNM with EXP, Cayley, Polar and qf.}
%	%\label{grafico_Rayleigh}
%\end{figure}

\subsection{ Academic problem on the cone of symmetric positive definite matrices} \label{sec:rubustcone}
The aim of this section is to present some numerical experiments to illustrate the behavior  of Algorithms 2 and 3. For that, we have considered the problem for minimizing two different functions defined on the cone of  symmetric positive definite matrices, ${\mathbb P}^{n}_{++}$, endowed with the inner product given by $\langle U,V \rangle=\mbox{tr} (VP^{-1}UP^{-1}), P\in {\mathbb P}^{n}_{++}, U,V\in T_P{\mathbb P}^{n}_{++} \approx~\mathbb{P}^n$ where, $\mathbb{P}^n$ denotes the set of symmetric matrices. Consider $f_1,f_2:\mathbb{P}^{n}_{++}\to\mathbb{R}$ defined  by
\begin{equation}\label{eq:familyfuction}
f_1(P)=\ln\,\det P + \mbox{tr}\,P^{-1}, \quad \quad f_2(P)=\ln\det P - \mbox{tr}\,P,
\end{equation}
where $\det P$  and $\mbox{tr}\,P$ denote the determinant and  trace of $P$, respectively. The function $f_2$ can be related to robotics, see \cite{robotic2002,robotic2007,robotic2002_2}. It is worth noting that these problems have already studied in \cite{Bortoloti2020}. In order to implement the Algorithms~\ref{AL:jogo_with_retraction} and ~\ref{AL:dampedRetraction2} to minimize the functions in  \eqref{eq:familyfuction}, we  first present the Newton equation and the safeguard direction associated to them. The Newton equations for $f_1$ and $f_2$ are, respectively, given by 
\begin{equation}\nonumber
PV+VP=2\left(P´^2-P^3\right), \qquad P^{-1}V+VP^{-1}=2\left(P^{-1}-I\right),
\end{equation}
where $I$ denotes the $n\times n$ identity matrix. On the other hand, the gradient of merit function  for $f_1$ and $f_2$ can be, respectively, writen as
\begin{equation}\nonumber%\label{eq:gradmeritfunctionfamily}
\grad\,\varphi_{1}(P)=I -P^{-1}, \qquad  \qquad \grad\,\varphi_{2}(P)=P^{3}-P^{2}.
\end{equation}
For all $P\in \mathbb{P}^{n}_{++}$ and $V\in T_{P}\mathbb{P}^{n}_{++}$ we compute the iterates in \eqref{eq:NM12331} and \eqref{eq:NM1233} by using  the following  retractions
\begin{eqnarray}%\label{eq:Retracao1_coneSPD}
R_PV&=&P^{1/2} \exp \left(P^{-1/2}VP^{-1/2}\right) P^{1/2} , \label{eq:retexpcone} \\
R_PV&=&P\exp \left( P^{-1}V \right),  \label{eq:retsimcone} \\ 
R_PV&=&P+V+1/2VP^{-1}V, \label{eq:ret2ordercone}\\
R_PV&=&P+V.  \label{eq:ret1ordercone}
\end{eqnarray}
The map presented in \eqref{eq:retexpcone} is the classical exponential and \eqref{eq:retsimcone} is obtained from \eqref{eq:retexpcone},  see \cite{Hosseini2015}. For \eqref{eq:ret2ordercone} and  \eqref{eq:ret1ordercone}, see \cite{jeuris2012survey} and \cite{bini2014geometric}, respectively. 

For both  functions given by \eqref{eq:familyfuction} we consider dimensions $n=100, 200, \cdots, 1000$ and 5 initial guesses for each dimension, totalizing 50 problems for each retraction. The initial guesses were generated by considering the same type of symmetric positive definite matrices presented in Section~\ref{sec:rayleigh}. Figure \ref{fig:f1dampedexpxsim} shows a performance of Algorithm \ref{AL:jogo_with_retraction} with the retractions \eqref{eq:retexpcone} and \eqref{eq:retsimcone} for minimizing the function $f_1$. It can be seen that the retraction \eqref{eq:retsimcone}   is much better than the second one. This behaviour can be justified by the smaller number of operations required to generate each iterate of the method.
\begin{figure}[h!]
\centering
\includegraphics[scale=0.65]{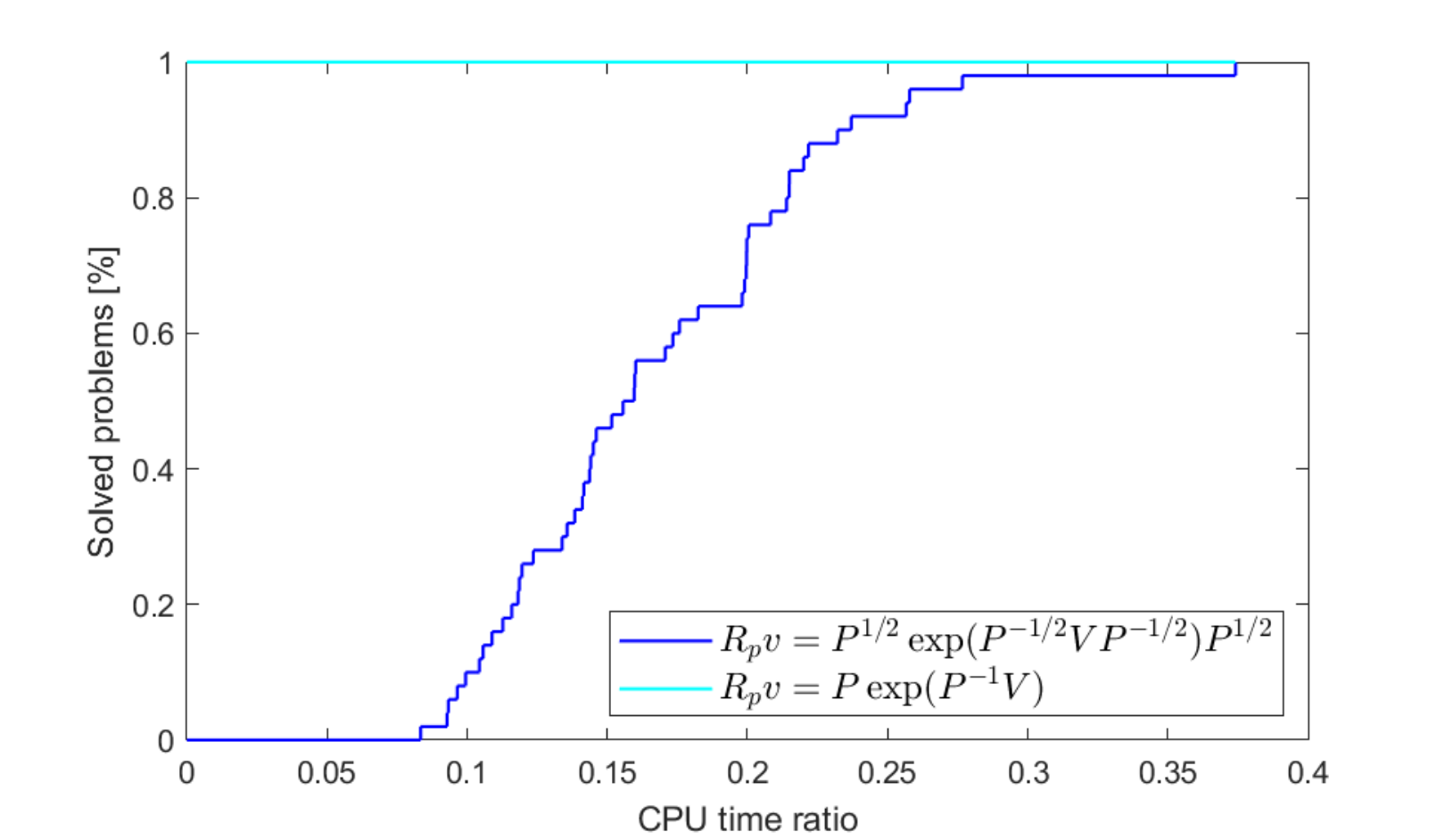}
\caption{Performance profiles comparing retractions \eqref{eq:retexpcone} and \eqref{eq:retsimcone} for $f_1$ in the Algorithm \ref{AL:jogo_with_retraction}.}
\label{fig:f1dampedexpxsim}
\end{figure}

In order to highlight the improviment  of the Algorithm \ref{AL:dampedRetraction2}, related to robustness,  we compare it with Algorithm \ref{AL:jogo_with_retraction} by using $\theta=0.9999$. It can be seen in figures \ref{fig:f11order} and \ref{fig:f12order} that Algorithm \ref{AL:jogo_with_retraction}, provided with retractions (34) and (35), did not solve all problems. This has happened because the step length has became too small. On the other hand, Algorithm \ref{AL:dampedRetraction2} was able to solve all problems because the inequality given by \eqref{eq:NewtonDirectionCondction} prevents an inappropriated decreasing of step length.

\begin{figure}[h!]
	\centering
	\subfigure[$R_PV=P+V$]{\includegraphics[scale=0.45]{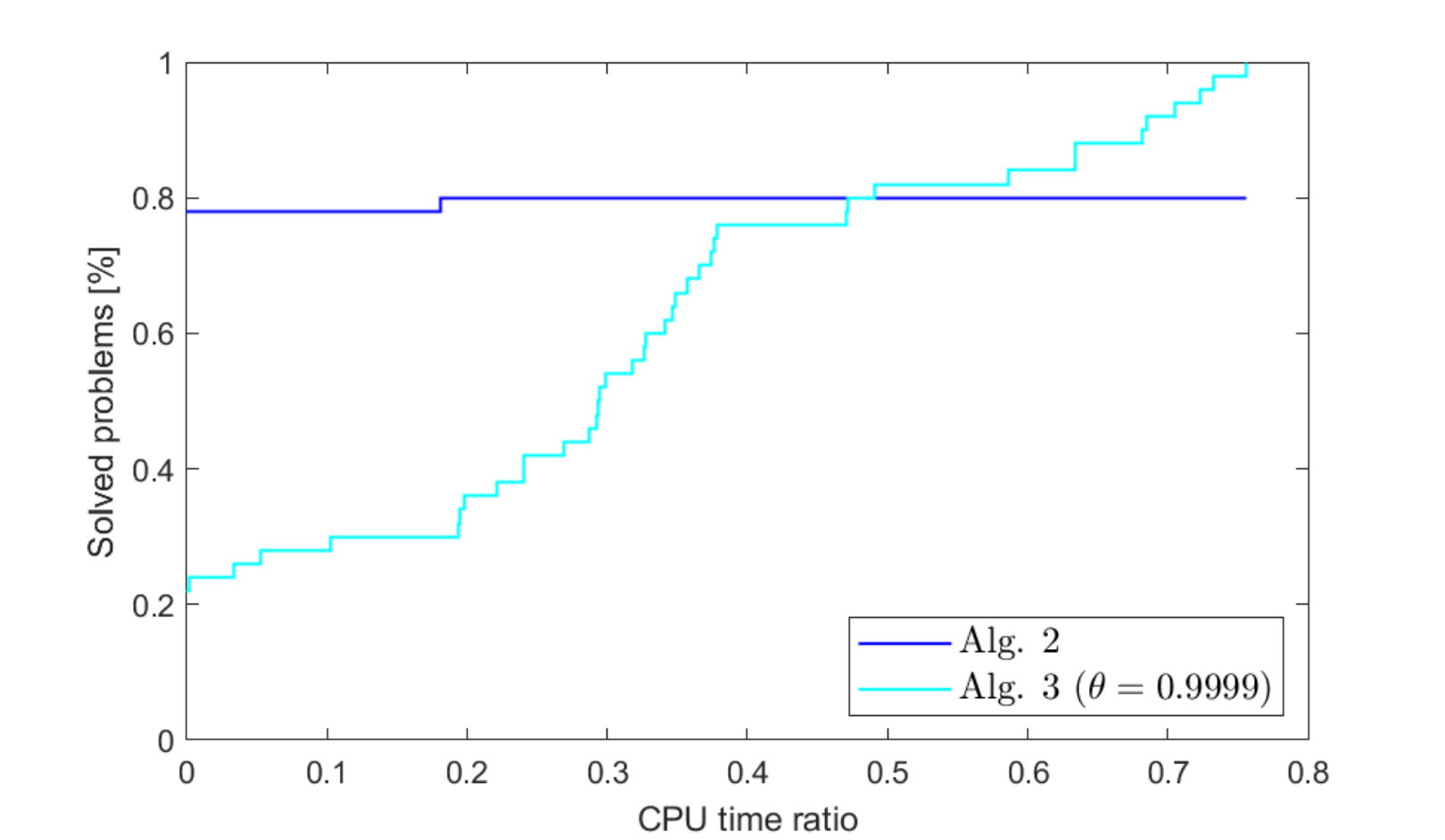}\label{fig:f11order}}
	\subfigure[$R_PV=P+V+1/2VP^{-1}V$]{\includegraphics[scale=0.45]{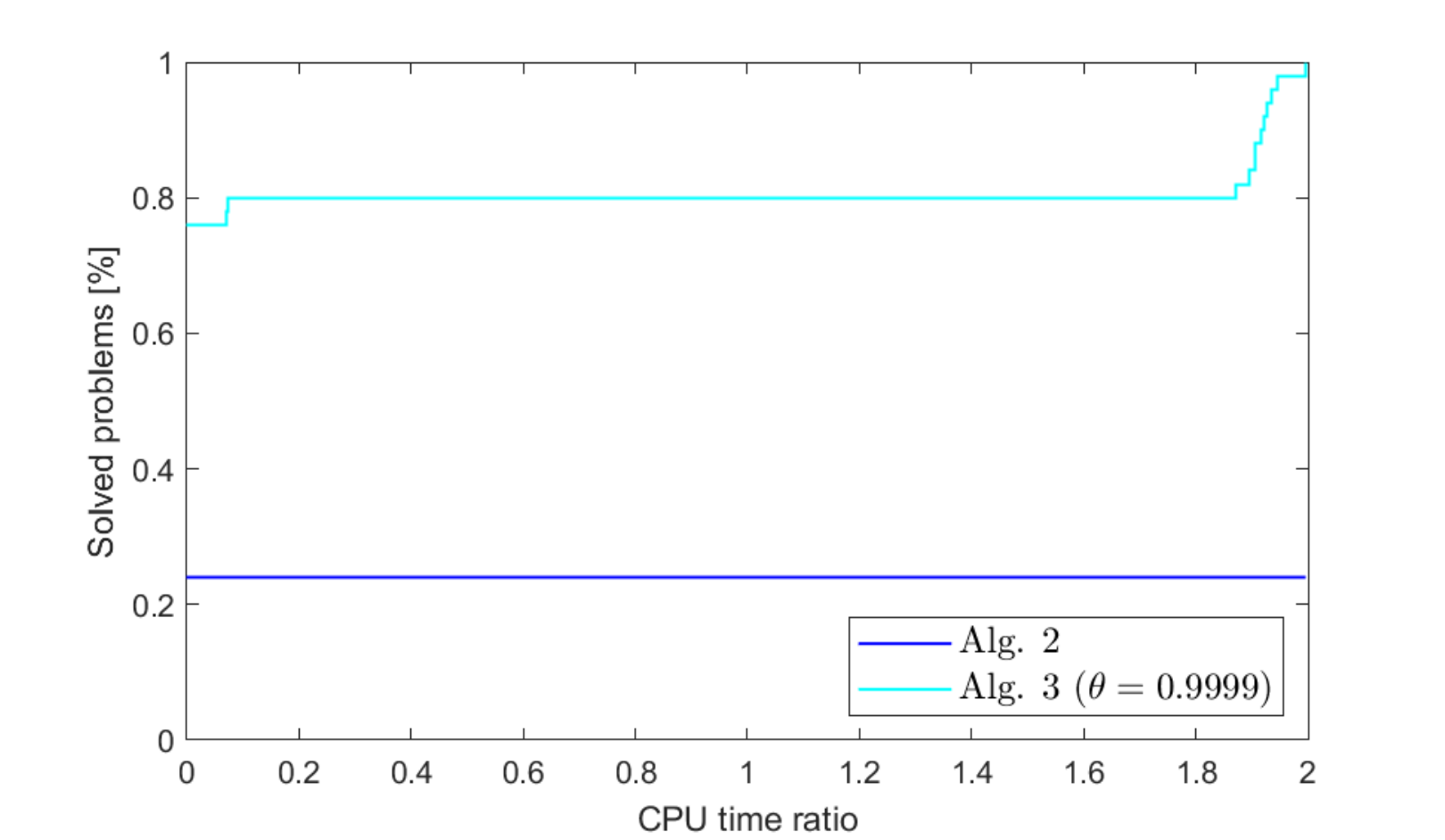}\label{fig:f12order}}
	\caption{Comparation  between Algorithms \ref{AL:jogo_with_retraction} and  \ref{AL:dampedRetraction2} with retractions \eqref{eq:ret1ordercone} and \eqref{eq:ret2ordercone} for $f_1$.}\label{fig:conef1}
\end{figure}

For minimizing the function $f_2$, we compare retractions \eqref{eq:retexpcone}-\eqref{eq:ret2ordercone} in the Algorithm \ref{AL:jogo_with_retraction} as can be seen in the figure \ref{fig:conef2}. It shows that the retraction \eqref{eq:ret1ordercone} is much better than the other.

\begin{figure}[h!]
	\centering
\includegraphics[scale=0.65]{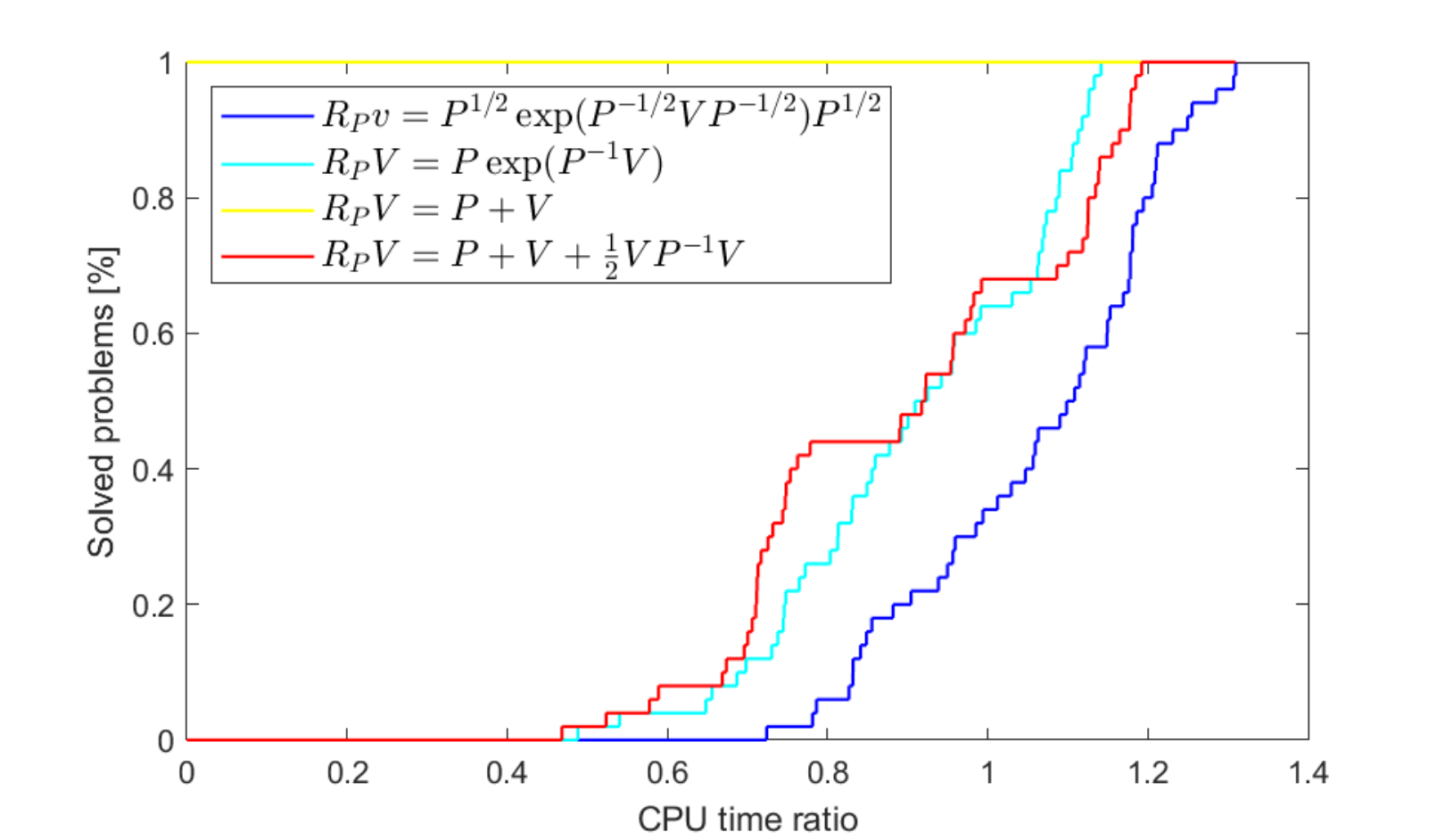}
	\caption{Performance profiles comparing retractions \eqref{eq:retexpcone}-\eqref{eq:ret2ordercone} for $f_2$  in the Algorithm \ref{AL:jogo_with_retraction}. }  \label{fig:conef2}
\end{figure}

 %%%%%%%%%%%%%%%%%%%%%%%%
\section{ Conclusions} \label{sec:conclusions}
%%%%%%%%%%%%%%%%%%%%%%%%%%
In this work, in order to find a singularity of a vector field defined on Riemannian manifolds, we presented a globalization of Newton method and established its global convergence with superlinear rate. The Algorithm \ref{AL:jogo_with_retraction} was designed with a general retraction in order to improve the performance of the analogous presented in \cite{Bortoloti2020}. As it can be seen in the Section \ref{Sec:Numerical_Experiment}, the numerical performance of Algorithm \ref{AL:jogo_with_retraction} is better than the one presented in \cite{Bortoloti2020}. We point out that the convergence analysis presented in \cite{Bortoloti2020} requeres nonsingularity of covariant derivative at cluster point. On the other hand, the condition \eqref{eq:NewtonDirectionCondction} in the Algorithm \ref{AL:dampedRetraction2} ensures that this hypothesis on the covariant derivative is not necessary to establish convergence, as it can be seen in the Theorem \ref{theoremAuxiliary123}. In addition, it is worth mentioning that condition  \eqref{eq:NewtonDirectionCondction} seems to avoid  Newton directions that can generate quite small step lengths. As a consequence,  Algorithm \ref{AL:dampedRetraction2} is more robust than Algorithm \ref{AL:jogo_with_retraction} in  number of solved problems as we can see in the experiments presented in Sections~\ref{sec:robust} and \ref{sec:rubustcone}.

\bibliographystyle{abbrv}
\bibliography{rnewton}

\end{document}